\newtheorem{thm}{Theorem}[section]
\newtheorem{lem}{Lemma}[section]
\newtheorem{pro}[thm]{Proposition}
\newtheorem*{thmA}{Theorem A}
\newcommand{\cbar}{\widehat{\mathbb{C}}}
\newcommand{\ra}{\rightarrow}
\newcommand{\pa}{\partial}
\newcommand{\sm}{\setminus}
\newcommand{\wh}{\widehat}
\newcommand{\ol}{\overline}
\newcommand{\mc}{\mathcal}
\makeatletter\@addtoreset{equation}{section}\makeatother 
\titleformat{\section}{\large}{\textbf{\thesection.}}{1em}{\textbf}
\titleformat{\subsection}{\normalsize}{\textbf{\thesubsection.}}{1em}{\textbf}
\titleformat{\subsubsection}{\normalsize}{\thesubsubsection.}{1em}{\textbf}
\begin{document}

\title{Non-recurrent rational maps with disconnected Julia set}
\author{Yan Gao}
\address{Yan Gao, School of Mathematical Sciences, Shenzhen University, Shenzhen 518061, China}
\email{gyan@szu.edu.cn}

\author{Lele Xu}
\address{Lele Xu, School of Mathematical Sciences, Shenzhen University, Shenzhen, 518052, P. R. China}
\email{2450191001@mails.szu.edu.cn}

\author{Luxian Yang}
\address{Luxian Yang, School of Mathematical Sciences, Shenzhen University, Shenzhen, 518052, P. R. China}
\email{lxyang@szu.edu.cn}
	
\subjclass[2020]{37F10, 37F20}

\begin{abstract}
We prove that every wandering exposed Julia component of a rational map is to a singleton, provided that each wandering Julia component containing critical points is non-recurrent. Moreover, we show that the Julia set contains only finitely many periodic complex-type components if each wandering Julia component containing critical values is non-recurrent.
\end{abstract}

\maketitle

\section{Introduction}\label{sec:intro}
Let $f:\widehat{\mathbb{C}}\rightarrow \widehat{\mathbb{C}}$ be a rational map of degree at least two. The {\bf Fatou set} $F_f$ consists of points $z\in\cbar$ for which the iteration sequence $\{f^n\}_{n =1}^{\infty}$ forms a normal family in a neighborhood of $z$. Its complement is the  {\bf Julia set} $J_f$.  The {\bf postcritical set} of $f$ is defined as 
$$P_f=\ol{\bigcup_{n>0, f'(c)=0} f^n(c)}.$$
For basic properties of Fatou and Julia sets, 
one can refer to \cite{beardon1991the,milnor2006dynamics}. A {central} issue in complex dynamics is to investigate the structures of the Julia sets.

The Julia set of $f$ is either connected or disconnected. In this paper we focus only on the disconnected case. 
We use  {\bf Julia component} to represent a component of the Julia set. Under iteration, a Julia component $K$ is either {\bf preperiodic}, i.e., $f^m(K)=f^{p+m}(K)$ for some $p\geq1,m\geq 0$; or {\bf wandering}. A preperiodic Julia component $K$ is  called {\bf periodic} if $m=0$ in the definition, and the smallest such $p$ is called the {\bf period} of $K$. 

In combinatorial terms, a compact connected set in $\wh{\mathbb{C}}$ is called to be of {\bf full type} if it has one complementary component,  of {\bf annuli type} if it has two complementary components, and of {\bf complex type} otherwise.

The structure of periodic Julia components was characterized by  McMullen \cite{mcmullen1988automorphisms}.  

\begin{thmA}\cite[Theorem 3.4]{mcmullen1988automorphisms}
Let $f$ be a rational map of degree at least two, and let $K$ be a non-singleton periodic Julia component of $f$  with period $p\ge 1$. Then there exists a rational map $g$ and a quasiconformal map $\phi:\cbar\ra\cbar$ such that
\begin{itemize}
	\item[(1)] $\phi(K)=J_g$, and
	\item[(2)] $\phi\circ f^p(z)=g\circ\phi(z)$ for $z\in K$.
\end{itemize}
\end{thmA}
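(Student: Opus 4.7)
The plan is to prove this via quasiconformal surgery. Set $F := f^p$, so that $F(K) = K$. The strategy is to extract from $F$ near $K$ a polynomial-like (or more generally, in McMullen's terminology, rational-like) dynamical system, and then promote it to a genuine rational self-map of $\cbar$ whose Julia set equals $\phi(K)$.

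The first task is to establish a polynomial-like restriction of $F$ around $K$. Since $K$ is a connected component of $J_f$, the set $J_f \sm K$ is at positive distance from $K$, so one can choose an open neighborhood $V$ of $K$ whose closure avoids $J_f \sm K$. Let $U$ be the connected component of $F^{-1}(V)$ containing $K$. By shrinking $V$ carefully, and using $F(K) = K$ together with the compactness of $K$, one arranges $\ol U \subset V$ so that $F : U \ra V$ is a proper holomorphic map of some degree $d \ge 2$. The triple $(F, U, V)$ is then a (generalized) polynomial-like system whose Julia set is exactly $K$.

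Next, extend $F|_{\ol U}$ to a quasiregular branched covering $\wt F : \cbar \ra \cbar$ of degree $d$ by prescribing on $\cbar \sm V$ a holomorphic model of the appropriate degree (for instance via Blaschke products on each complementary disk, or a matching model in the rational-like case), and interpolating quasiconformally across $V \sm U$. One then builds a $\wt F$-invariant Beltrami coefficient $\mu$ on $\cbar$ by starting from the standard structure $\mu_0$ and iteratively pulling back under $\wt F$. Because $\wt F$ is holomorphic on $U$ and on $\cbar \sm \ol V$, each orbit traverses the distortion region $V \sm U$ only finitely many times, which forces $\|\mu\|_\infty < 1$. The measurable Riemann mapping theorem then produces a quasiconformal $\phi : \cbar \ra \cbar$ integrating $\mu$; setting $g := \phi \circ \wt F \circ \phi^{-1}$, the composition preserves the standard complex structure and is therefore a rational map. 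Since $\wt F \equiv F = f^p$ on $U \supset K$, the conjugacy $\phi \circ f^p = g \circ \phi$ holds on $K$, and $\phi(K) = J_g$ because quasiconformal conjugation sends the Julia set of the polynomial-like pair $(F, U, V)$ to $J_g$.

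The hard part is the first step, the construction of $(U, V)$ with $\ol U \subset V$ and $F : U \ra V$ proper of finite degree. When $K$ is of complex type with many complementary components, $V$ cannot be chosen simply connected and the more elaborate rational-like framework is needed; complementary components of $K$ that contain critical points of $F$ further constrain the choice of $V$ and the compatibility of the holomorphic model on $\cbar \sm V$ with $F$ along $\pa V$. Once the pair $(U, V)$ is in hand, the quasiconformal extension and the Beltrami-coefficient construction are routine, and the theorem follows from the measurable Riemann mapping theorem.
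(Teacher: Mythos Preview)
The paper does not prove Theorem~A at all: it is quoted as a black-box result from McMullen \cite{mcmullen1988automorphisms} and used without proof. So there is no ``paper's own proof'' to compare against.

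That said, your sketch does follow McMullen's original strategy (polynomial-like/rational-like restriction around $K$, then quasiconformal surgery via the measurable Riemann mapping theorem). A few points are genuinely incomplete in your outline. First, the assertion that $\deg(F:U\to V)\ge 2$ requires justification: you need that $K$ non-singleton forces $f^p|_K$ to have topological degree at least two, which McMullen deduces from the expansion/contraction dichotomy on Julia components. Second, ``shrinking $V$ carefully'' hides real content: $V$ must be chosen finitely connected with smooth boundary transverse to the dynamics, and one must verify that the non-escaping set of $(F,U,V)$ is exactly $K$ (not merely contained in $K$). Third, the extension step is not as routine as you suggest when $K$ has infinitely many complementary components; one works instead with a finitely-connected $V$ whose complementary disks may each contain many components of $\cbar\setminus K$, and the model map on $\cbar\setminus V$ must be chosen so that boundary degrees match those of $F$ on $\partial U$. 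These are exactly the technicalities McMullen handles; your outline identifies the right architecture but would not stand as a self-contained proof.
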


From this theorem, one can easily deduce that any rational map with disconnected Julia set has countably many periodic Julia components. On the other hand, no examples are known of rational maps with infinitely many periodic complex-type Julia components.\footnote{In the family $f_\lambda(z)=z^n+\lambda/z^n,\lambda\in\mathbb{C}$, one can find rational maps with infinitely many periodic Julia component of full type or annuli type.} This naturally leads to the following problem:
\vspace{5pt}

\noindent\emph{Problem 1. Is there a rational map with infinitely many complex-type periodic Julia components?}\vspace{5pt}

Since a Julia set is either connected or has uncountablely many components \cite{beardon1991iteration,milnor2006dynamics}, any rational map with disconnected Julia set necessarily has uncountably many wandering Julia components. It is not difficult to check that Problem 1 has a negative answer for rational maps whose wandering Julia components avoid critical points, in particular, for \emph{geometrically finite} maps (i.e. $P_f\cap J_f$ is a finite set) \cite{pilgrim2000rational}. 

A basic combinatorial problem about wandering Julia components is the \emph{existence of complex-type wandering Julia components}.\footnote{In the family $f_\lambda(z)=z^n+\lambda/z^n,\lambda\in\mathbb{C}$, one can find rational maps with eventually full-type or annuli-type wandering Julia components} That is, does there exist a rational map with a  wandering Julia component such that each component in its orbit is of complex type?
This problem was solved in the negative for geometrically finite rational maps by Pilgrim-Tan \cite{pilgrim2000rational}, and for cubic rational maps by Cui-Peng-Yang \cite{cui2024wandering}.

Another important problem about wandering Julia components concerns the analytic property of full-type or annuli-type components.  Branner and Hubbard conjectured that every wandering Julia component of a polynomial, which is necessarily of full-type, is a singleton \cite{branner1992iteration}.  This conjecture was independently confirmed by Qiu-Yin \cite{qiu2009proof} and Kozlovski-van Strien \cite{kozlovski2009local}. Tan and Pilgrim \cite{pilgrim2000rational} further proved that for geometrically finite rational maps, every full-type wandering Julia component is a singleton, while every annuli-type wandering Julia component is a Jordan curve.

Motivated by these works, a Julia component is called {\bf exposed} if it intersects the boundary of a Fatou domain. Every wandering exposed Julia component is eventually of full-type (see Lemma \ref{lem:basic}). As a natural generalization of polynomial case, we may ask:\vspace{5pt}
 
\noindent\emph{Problem 2. Is every wandering exposed Julia component a singleton?}\vspace{5pt}

For non-polynomial rational maps,  these two problems have only been solved in the geometrically finite case (\cite{cui2011topology} for Problem 1 and \cite{pilgrim2000rational} for Problem 2). In this paper, we will answer the two problems for a class of geometrically infinite maps, usually called ``critical non-recurrent'' maps. 

A connected and compact set $E\subset \cbar$ is called {\bf non-recurrent} under $f$, if there exists an open neighborhood $D$ of $E$ such that $f^n(E)\cap D=\emptyset$ for all $n\geq1$. 

\begin{thm}\label{thm:main2}
Let $f:\widehat{\mathbb{C}}\rightarrow \widehat{\mathbb{C}}$ be a rational map with disconnected Julia set. Suppose that each wandering exposed Julia component of $f$ containing critical points is non-recurrent. Then every wandering exposed Julia component of $f$ is a singleton. 
\end{thm}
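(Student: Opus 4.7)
Let $K_0$ be a wandering exposed Julia component with forward images $K_n := f^n(K_0)$. By Lemma \ref{lem:basic}, after replacing $K_0$ with a suitable forward iterate, every $K_n$ is of full type (and still exposed, since exposedness is preserved under $f$). A key elementary reduction is that because $K_0$ is wandering, the $K_n$ are pairwise distinct components of $J_f$, so each critical point of $f$ belongs to at most one $K_n$. Therefore the set $S := \{n \ge 0 : K_n \text{ contains a critical point of } f\}$ is finite, and for every $n \in S$ the hypothesis gives that $K_n$ is non-recurrent. This finiteness of $S$ is the structural fact that makes a pullback argument feasible.

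The plan is a Branner--Hubbard style nest of annuli around $K_0$. For each $n$, I would construct a Jordan disk $U_n \supset K_n$ with $\partial U_n \subset F_f$, satisfying the nesting condition $W_n \Subset U_n$, where $W_n$ denotes the component of $f^{-1}(U_{n+1})$ containing $K_n$. For $n \in S$, non-recurrence provides a neighborhood of $K_n$ separated from all of $K_{n+1}, K_{n+2}, \ldots$, yielding room for a thick annular collar; exposedness then lets us take the outer boundary inside $F_f$. For $n \notin S$, the analogous construction uses exposedness together with the local structure of $J_f$ near $K_n$, together with a recursive choice that makes each $U_{n+1}$ small enough to enforce $W_n \Subset U_n$. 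Setting $V_n$ to be the component of $f^{-n}(U_n)$ containing $K_0$, one obtains nested topological disks $V_0 \supset V_1 \supset \cdots \supset K_0$. Since only backward passes through the finite index set $S$ are ramified (with local degree bounded by $\deg f$), and all other pullbacks are conformal on disks containing no critical points, the modulus of $V_n \setminus \overline{V_{n+1}}$ is comparable to $\mathrm{mod}(U_n \setminus \overline{W_n})$ up to the universal factor $(\deg f)^{|S|}$.

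Grötzsch's inequality then yields $\mathrm{mod}(V_0 \setminus K_0) \ge \sum_n \mathrm{mod}(V_n \setminus \overline{V_{n+1}})$; divergence of this sum forces $K_0$ to be a singleton. The main obstacle is therefore to secure a uniform positive lower bound $\mathrm{mod}(U_n \setminus \overline{W_n}) \ge M_0$ for infinitely many $n$. Without such a bound the annular shells can collapse geometrically and the sum may converge. I expect to establish the bound by combining (i) exposedness, which supplies a ``thick direction'' via an adjacent Fatou component whose grand orbit is controlled by Sullivan's no-wandering-domains theorem and the classification of periodic Fatou components; (ii) the pairwise disjointness of the $K_n$ and the disconnectedness of $J_f$, which prevents $W_n$ from touching $\partial U_n$ once $U_{n+1}$ is chosen small relative to $K_{n+1}$'s separation from neighboring components; and (iii) for indices in and near $S$, the non-recurrence hypothesis, which rules out pathological accumulation of the forward orbit onto $K_n$. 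This moduli-bound step is the essential analytic core of the argument and parallels the key estimate in the Branner--Hubbard conjecture and its rational-map generalizations \cite{qiu2009proof,kozlovski2009local,pilgrim2000rational}.
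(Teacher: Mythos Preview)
Your reduction to a finite set $S$ of critical times and the bounded-degree pullback estimate are correct in spirit, but the argument has a genuine gap at exactly the point you flag as ``the main obstacle'': you do not establish the uniform lower bound $\mathrm{mod}(U_n\setminus\overline{W_n})\ge M_0$ for infinitely many $n$, and the mechanisms (i)--(iii) you list do not provide one. The difficulty is that the orbit $\{K_n\}$ will in general accumulate on \emph{preperiodic} critical Julia components (these are not covered by the non-recurrence hypothesis, which concerns only wandering critical components). Near such accumulation times any Jordan disk $U_n\supset K_n$ with $\partial U_n\subset F_f$ that avoids the nearby critical component must be very thin, and there is no a priori bound on the collar modulus. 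Note also that your degree bound $(\deg f)^{|S|}$ tacitly requires each $U_j$ with $j\notin S$ to contain no critical points at all; arranging this simultaneously with a definite annular collar along an accumulating orbit is precisely the problem, and nothing in (i)--(iii) resolves it.

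The paper circumvents this by abandoning the fresh-annulus-at-each-step scheme altogether. It fixes a single multiply-connected periodic Fatou domain $U$, builds Branner--Hubbard type puzzle pieces $\mc P_n$ as components of $\widehat{\mathbb C}\setminus U_{N+n}$, and works with the \emph{finite} set $\mc P_0$ of depth-$0$ pieces. The key combinatorial step (Lemmas~\ref{lem:key1} and~\ref{lem:key2}) is to produce an increasing sequence $\{n_k\}$ such that $f^{n_k}(P_{n_k}(E))$ lands in a depth-$0$ piece \emph{outside} the finitely many pieces meeting the preperiodic critical boundary components (the set $\mc E_1$). Non-recurrence is used not to build collars but to show that along each such pullback chain every critical boundary component is visited at most once, giving a uniform degree bound $d^{m_0}$. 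Since the targets are drawn from a fixed finite list, one can surround each once and for all by a slightly larger disk $D_P$ with $D_P\setminus P\subset U$ disjoint from $P_f$; the Shrinking Lemma then forces $\mathrm{diam}\,P_{n_k}(E)\to 0$. No modulus sum is needed, and the accumulation onto $\mc E_1$ is handled combinatorially rather than analytically. If you want to repair your approach, the missing ingredient is exactly this: a finite set of ``target'' regions with fixed collars that the orbit hits infinitely often with bounded pullback degree.
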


The basic tool in the proof of Theorem \ref{thm:main2} is the construction of puzzle pieces from a periodic Fatou domain, a method first introduced by Branner-Hubbard \cite{branner1992iteration} for polynomials. Unlike the polynomial case, however, some puzzle pieces are not mapped to puzzle pieces under a rational map. Our main work is to deal with this combinatorial complexity. Theorem \ref{thm:main2} will be proved in Section 2. \vspace{3pt}

Under a stronger assumption, we obtain an answer to Problem 1.

\begin{thm}\label{thm:main1}
Let $f:\widehat{\mathbb{C}}\rightarrow \widehat{\mathbb{C}}$ be a rational map with disconnected Julia set. Suppose that every wandering Julia component containing critical values is non-recurrent. Then $J_f$ has finitely many periodic components of complex type. 
\end{thm}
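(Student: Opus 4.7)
The plan is to argue by contradiction: suppose $J_f$ has infinitely many periodic complex-type Julia components, hence infinitely many distinct periodic orbits $[K^{(i)}]$ of such components (each orbit being finite). The strategy is to extract from each orbit a critical orbit of $f$ whose values interact with the filled hull of $[K^{(i)}]$, and then derive a contradiction via the finiteness of critical values combined with the non-recurrence hypothesis.

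First, I apply Theorem A to each non-singleton periodic component $K$ of period $p$, obtaining a rational map $g_K$ of degree $\ge 2$ and a quasiconformal map $\phi_K:\cbar\ra\cbar$ conjugating $f^p|_K$ to $g_K|_{J_{g_K}}$. Since $K$ is of complex type, $\phi_K$ puts the $\ge 3$ complementary components of $K$ in bijection with the $\ge 3$ Fatou components of $g_K$; using the boundary conjugation on $K$, one checks that the $f^p$-permutation on the complementary components of $K$ matches the $g_K$-permutation on its Fatou components. Sullivan's no-wandering-domain theorem applied to $g_K$ then forces the complementary components of $K$ to be preperiodic under $f^p$; in particular some periodic cycle $U_1\ra U_2\ra\cdots\ra U_k\ra U_1$ of complementary discs exists, with return map $f^{kp}:U_1\ra U_1$ of degree equal to $\deg(g_K^k)$.

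The central step is to extract from each orbit $[K^{(i)}]$ a critical point $c_i$ of $f$ whose forward orbit eventually enters the filled hull of $[K^{(i)}]$. When the return-map degree on $\{U_j\}$ is $\ge 2$, a Riemann--Hurwitz count locates a critical point of $f^p$ in some $U_j$, yielding a critical point of $f$ in its grand orbit. When the degree is $1$, the return map $f^{kp}:U_1\ra U_1$ is a conformal bijection of a topological disc; analyzing its Möbius type (elliptic with interior fixed point, or parabolic/hyperbolic with fixed point on $\pa U_1\subset K$) and applying Fatou--Shishikura-style arguments---either directly to $f$ when the fixed point lies in $F_f$, or by transferring the $g_K$-side critical-orbit data through a puzzle-piece analysis in the spirit of the proof of Theorem \ref{thm:main2}---produces a critical point $c_i$ of $f$ whose value $v_i=f(c_i)$ has forward orbit eventually inside the filled hull of $[K^{(i)}]$.

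Finally, since $f$ has finitely many critical values, by pigeonhole some $v$ serves as $v_i$ for infinitely many orbits $[K^{(i)}]$. If $v\in F_f$, then its orbit is confined to a preperiodic Fatou cycle; if $v$ lies in a periodic Julia component, its orbit is confined to that one cycle of components; both cases contradict the infinite association with distinct periodic orbits. If $v$ lies in a wandering Julia component $L$, the non-recurrence hypothesis gives a neighborhood $D$ of $L$ with $f^n(L)\cap D=\emptyset$ for all $n\ge 1$, whereas the infinite association forces $f^n(L)$ to recur near the accumulation structure of the $K^{(i)}$'s, contradicting non-recurrence. The main obstacle is the central step, especially in the degree-one case: since the quasiconformal conjugation lives only on $K$, the internal Fatou-theoretic data of $g_K$ does not automatically transfer to $f$, so one must carry out a delicate puzzle-piece combinatorial argument through boundary dynamics to genuinely exhibit a critical orbit of $f$ captured by $[K^{(i)}]$.
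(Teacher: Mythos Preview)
Your overall strategy --- extract from each periodic complex-type orbit $[K^{(i)}]$ a critical value $v_i$ whose forward orbit enters the filled hull of $[K^{(i)}]$, then pigeonhole --- does not close. The crucial failure is the final contradiction. Suppose the pigeonholed value $v$ lies in a wandering Julia component $L$. Non-recurrence of $L$ only says $f^n(L)$ never returns to a fixed neighborhood $D$ of $L$; it places no restriction on $f^n(L)$ visiting the filled hulls of other Julia components. And these filled hulls can easily be nested: if $K\prec K'$ in the sense that $\wh{K'}\subset\wh{K}$, then a single point in $\wh{K'}$ lies in both hulls, so one critical value can serve infinitely many distinct $[K^{(i)}]$ without any recurrence whatsoever. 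Your sentence ``the infinite association forces $f^n(L)$ to recur near the accumulation structure of the $K^{(i)}$'s'' asserts exactly what needs to be proved, and it is the hard part of the theorem. The same nesting phenomenon breaks your Fatou and periodic-Julia cases as well: a single periodic Fatou domain can sit inside the bounded complementary components of infinitely many nested periodic Julia components.

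Separately, your ``central step'' in the degree-one case is not a proof but a description of a hoped-for argument. Theorem~A conjugates $f^p$ to $g_K$ only on $K$ itself, so the Fatou--Shishikura critical-orbit data for $g_K$ lives entirely outside the region where you control $f$; you acknowledge this, but the proposed remedy (a puzzle argument ``in the spirit of Theorem~\ref{thm:main2}'') would need a genuinely new construction, since the puzzles of Section~2 are built from a fixed Fatou domain, not from the internal structure of a periodic Julia component.

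The paper's proof takes a different route that confronts the nesting head-on. It classifies the periodic complex-type components by how the critical values sit relative to their bounded complementary components, isolates a subset $\mc{B}$ on which one can define a first-return map $G$ and a total order $\prec$ within each of finitely many equivalence classes, and then shows $\#\mc{B}=\infty$. From an infinite $\prec$-chain in one class it extracts the \emph{intersection} $E_i=\bigcap_K V(K)$, proves this is a wandering Julia component containing critical values, and shows that the $G$-dynamics forces the orbit of $E_i$ to accumulate on another such $E_j$. Iterating and using that there are only finitely many classes yields some $E_{j_0}$ whose orbit accumulates on itself --- a recurrent wandering component containing critical values, contradicting the hypothesis. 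The essential idea you are missing is that the recurrent wandering component is not one of the critical-value components you start with, but a limit object manufactured from the nested structure of infinitely many periodic components.
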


The idea of the proof is similar to that in \cite{cui2024wandering}. The main different lies that,  the authors of \cite{cui2024wandering} study the configuration of complex-type Julia components within a single wandering orbit, while here we instead consider the configuration of all complex-type periodic Julia components. Theorem \ref{thm:main1} will be proved in Section 3.

\vspace{0.5cm}
\textbf{Acknowledgment.} This work is supported by the National Key R\&D Program of China (Grant no.\,2021YFA1003203),
 the NSFC (Grant nos.\,12131016 and \,12322104), and the NSFGD (Grant no.\,2023A1515010058).

\section{Wandering exposed  components}
In this section, we will prove Theorem \ref{thm:main2}. We always assume that $f:\wh{\mathbb{C}}\rightarrow \wh{\mathbb{C}}$ is a rational map of degree $d\geq 2$, and $U$ is an $f$-fixed multiply-connected Fatou domain. If $U$ is a parabolic basin, let $x_*\in\partial U$ such that $U$ is the immediate parabolic basin of the parabolic fixed point $x_*$. 

\subsection{Classification of  components of $\partial U$. }

Let $E\subset \mathbb{C}$ be a connected and compact set.  We define the {\bf filling} $\wh{E}$ of $E$ as the union of $E$ together with all bounded components of $\mathbb{C}\setminus E$. Note that $E$ is of full type if and only if $E=\wh{E}$.

Let $\mc{E}$ denote the collection of connected components of $\partial U$. Without loss of generality, we assume that $\infty\in U$.  Then $\wh{E}\cap U=\emptyset$ for any  $E\in\mc{E}$.  

An element $E\in \mc{E}$ is called  {\bf critical} if $\wh{E}$ contains critical points of $f$, and is called \textbf{essential} if $\wh{E}\cap f^{-1}(U)\ne \emptyset$. The following facts are immediate:
\begin{enumerate}
\item there are at most $d-1$ essential elements in $\mc{E}$;\vspace{2pt}

\item if $E\in\mc{E}$, then $f(E)\in \mc{E}$;
\item if $E\in\mc{E}$ is non-essential, then $f(\wh{E})=\wh{f(E)}$.
\end{enumerate}
Fact (2) above allow us to define a self-map 
$$
\sigma_f:\mc{E} \to \mc{E}, \ E\mapsto f(E)
$$ 
This map is surjective since $f(U)=U$. 
 
 Using the map $\sigma_f$, we can naturally define the  {\bf (pre)periodic} or {\bf wandering} elements of $\mc{E}$. Moreover, for any two elements $E,E'\in \mc{E}$, we say that the orbit of $E$ (under $\sigma_f$)  {\bf accumulates} to $E'$ if for any neighborhood $D$ of $E'$, there exists an integer $n_{D}\geq 1$ such that $\sigma_f^{n_D}(E)\cap D\neq\emptyset$. 

\begin{lem}\label{lem:basic}
For any $E\in\mc{E}$, the following statements hold. 
\begin{enumerate}
\item If $E$ is full, then $f(E)$ is full.
\item If $E$ is wandering, then $\sigma_f^{n}(E)$ is full for every sufficiently large $n$.
\item  If $E$ is essential, then it is a non-full and critical element of $\mc{E}$.
\item If $E$ is non-full while $\sigma_f(E)$ is, then $E$ is essential and contains critical points of $f$.
\end{enumerate}
\end{lem}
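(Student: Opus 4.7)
I would prove the four parts in the order (3), (4), (1), (2), since (3) supplies the key rigidity tool and (1), (4) then organize the behavior of $\sigma_f$ on $\mc{E}$, with (2) a consequence.

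For (3), I would take $z_0 \in \wh{E}\cap f^{-1}(U)$. Since $f(E)\subset \pa U$ is disjoint from $U$ and $f(z_0)\in U$, necessarily $z_0\in \wh{E}\sm E$, which is therefore nonempty, so $E$ is non-full. Let $V$ be the bounded component of $\mb{C}\sm E$ containing $z_0$; by classical planar topology $V$ is simply connected. The image $f(V)$ is open connected with $\pa f(V)\subset f(\pa V)\subset f(E)\subset \pa U$, and meets $U$ (since $f(z_0)\in U$); a clopen argument in $U$ then forces $U\subset f(V)$. Pulling $U$ back to a component $U^*\subset V$ of $f^{-1}(U)$, the restriction $f\colon U^*\ra U$ is a proper branched covering, and Riemann--Hurwitz applied to this map---using that $V$ is a disc while $U$ is multiply connected (so $\chi(U)\le 0$, with the infinite-connectivity case reduced to a finitely-connected exhaustion of $U$)---forces a critical point of $f$ to lie in $V\subset \wh{E}$. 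Handling the infinite-connectivity case of $U$ is the main nuisance at this step.

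For (4), with $E$ non-full and $\sigma_f(E)=f(E)$ full, pick a bounded component $V$ of $\mb{C}\sm E$. Then $f(V)$ is open connected with $\pa f(V)\subset f(E)$. Since $f(E)$ is full, $\cbar\sm f(E)$ is a single disc containing $U$, so $f(V)$ lies in this disc. A direct connectedness argument (noting that $f(V)$ is open and bounded with boundary in $\pa U$) then shows $f(V)\cap U\ne\emptyset$, so $E$ is essential; criticality follows from (3). For (1), with $E$ full and $F:=f(E)$ presumed non-full, choose a bounded component $V$ of $\mb{C}\sm F$; since $\wh{F}\cap U=\emptyset$ and $f(\infty)\in U$, the set $f^{-1}(\ol{V})\subset \mb{C}$ is compact, and a component $W$ of $f^{-1}(V)$ is bounded open with $\pa W\subset f^{-1}(F)$ and $W\cap E=\emptyset$. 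I would argue that $W$ sits inside a bounded complementary component of $\mb{C}\sm E$ (contradicting fullness) by showing the component of $f^{-1}(F)$ carrying the outer boundary of $W$ must equal $E$: since $E$ is an isolated component of the closed set $f^{-1}(F)=f^{-1}(f(E))$ (being a component of $\pa U$), degree counting on preimages of a point in $V$ forces this outer boundary to lie in $E$, so $W\subset \wh{E}\sm E$, giving the desired contradiction.

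For (2), I would combine the previous parts. By (1), once an iterate is full it stays full; by (4), any ``non-full $\to$ full'' transition in $\sigma_f^n(E)$ requires $\sigma_f^n(E)$ to be essential, and since the number of essential elements in $\mc{E}$ is at most $d-1$ while the wandering orbit visits each element at most once, such transitions occur only finitely often. This yields a dichotomy for large $n$: either $\sigma_f^n(E)$ is full (the desired conclusion) or $\sigma_f^n(E)$ is non-full from some stage on. To rule out the second alternative, I would use the observation underlying (4): if $\sigma_f^n(E)$ is non-essential and non-full, then $f$ sends each bounded complementary component of $\mb{C}\sm \sigma_f^n(E)$ into a bounded complementary component of $\mb{C}\sm \sigma_f^{n+1}(E)$, producing a forward-invariant family of open sets in $\cbar\sm U$; this family must accumulate onto a closed $f$-invariant subset of $\pa U$ containing a periodic component, contradicting the wandering assumption. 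The main obstacle I anticipate lies in part (1), where the contradiction hinges on carefully tracking components of $f^{-1}(F)$ and matching degrees.
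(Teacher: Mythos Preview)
There are genuine gaps in parts (3), (1), and (2). In (3), Riemann--Hurwitz applied to the proper map $f:U^*\to U$ with $\chi(U)\le 0$ only gives $\chi(U^*)=\delta\,\chi(U)-r\le 0$; this is perfectly compatible with $r=0$ (an annulus covers an annulus without branching), and the fact that $U^*$ lies inside the disc $V$ adds nothing, since multiply connected domains embed freely in discs. What your argument \emph{does} establish, via $\infty\in U\subset f(V)$, is that $\wh{E}$ contains a pole of $f$. The paper exploits precisely this: pull back a small Jordan disk $D\supset\wh{\sigma_f(E)}$ disjoint from $U$; the component $W\ni E$ of $f^{-1}(D)$ cannot contain that pole, hence is multiply connected, and Riemann--Hurwitz on the proper map $f:W\to D$ (with $D$ a \emph{disk}, so $\chi(D)=1$) now forces a critical point in $W$, hence in $\wh{E}$ as $D$ shrinks. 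The same pull-back device, with $D$ replaced by a small finitely connected neighborhood $A$ of $f(E)$ avoiding critical values off $f(E)$, yields (1) in one stroke: the component of $f^{-1}(A)$ containing $E$ has connectivity at least that of $A\ge 2$, contradicting fullness of $E$. Your alternative route to (1) via ``$E$ is an isolated component of $f^{-1}(f(E))$'' is not justified---this preimage is typically much larger than $\pa U$ and its components may well accumulate on $E$---and degree counting on a point of $V$ does not determine which component of $f^{-1}(f(E))$ carries the outer boundary of $W$.

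For (2) you are missing the decisive input: Sullivan's no-wandering-domains theorem. Once $\sigma_f^n(E)$ is non-essential for all $n\ge N$, fact (3) before the lemma gives $f^k\big(\wh{\sigma_f^N(E)}\big)=\wh{\sigma_f^{N+k}(E)}$ for every $k\ge 0$; since these filled hulls are pairwise disjoint, any bounded complementary component of $\sigma_f^N(E)$ would be a wandering Fatou domain, which is impossible. Your proposed endgame---that the forward-invariant family ``accumulates onto a periodic component of $\pa U$, contradicting the wandering assumption''---does not yield a contradiction: the hypothesis is only that $E$ itself is wandering, which says nothing about other components of $\pa U$ being periodic.
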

\begin{proof}
(1) Assume that $f(E)$ is not full. Then there exists a finitely connected set $A$ containing $f(E)$ such that $A\sm f(E)$ contains no critical values. Let $A_1$ be the component of $f^{-1}(A)$ containing $E$. Then, by the Riemann-Hurwitz formula, the connectivity of $A_1$ is greater than or equal to that of $A$. This implies that $\wh{\mathbb{C}}\sm E$ is disconnected, contradicting the condition that $E$ is full. So $f(E)$ is full. 
\vspace{2pt}

(2) Since the number of essential components in $\mc{E}$ is finite, there exists $N>0$ such that $\sigma_f^n(E)$ is non-essential for every $n\ge N$. Then $$f^k(\wh{\sigma_f^N(E)})=\wh{\sigma_f^{N+k}(E)},\ \forall\, k\geq0.$$

If $\sigma_f^{N}(E)$ is not full, then any component of $\wh{\sigma_f^N(E)}\sm\sigma_f^N(E)$ would be a wandering Fatou domain since $\wh{\sigma_f^n(E)}\cap\wh{\sigma_f^m(E)}=\emptyset$ for distinct $m,n\geq 0$. This contradicts Sullivan's eventually periodic theorem. Hence $\sigma_f^{n}(E)$ is full for every $n\geq N$ by statement (1).\vspace{2pt}

(3) It is obviously that $E$ is non-full since $\wh{E}\cap f^{-1}(U)\neq \emptyset$. 
Choose a Jordan disk $D\supset \wh{\sigma_f(E)}$ in an arbitrary small neighborhood of $\wh{\sigma_f(E)}$. Let $W$ be the unique component of $f^{-1}(D)$ containing $E$. Since $E$ is essential, $\wh{E}$ contains poles of $f$, but $W$ does not.  Thus $W$ is not simply connected. By Riemann-Hurwitz formula, $W$ must contain critical points of $f$. So $\wh{E}$ contains critical points since $D$ is an arbitrary small neighborhood of $\wh{\sigma_f(E)}$. \vspace{2pt}

(4) Choose a Jordan disk $D\supset \wh{\sigma_f(E)}$ in an arbitrary small neighborhood of $\wh{\sigma_f(E)}$. Let $W$ be the unique component of $f^{-1}(D)$ containing $E$. Since $E$ is non-full and $\partial D$ can be taken  arbitrary close to $\sigma_f(E)$, the domain $W$ can not be simply connected. It follows that $W$ contains critical points, and some bounded components of $\mathbb{C}\setminus \overline{W}$ contains poles of $f$. Then statement (4) follows as $D$ is arbitrary.
\end{proof}

We specify two  subsets of $\mc{E}$ that will be useful in the dicusson below. Set
\begin{equation}\label{eq:55}
\mc{E}_1=\{ \sigma_f^n(E) \,|\, n\geq 0, E \text{ is a preperiodic critical  element of  } \mc{E}\}\bigcup\{E_{x_*}\},
\end{equation}
where $E_{x_*}$ denotes the unique element of $\mc{E}$ containing the parabolic fixed point $x_*$, if $U$ is parabolic. Then $\mc{E}_1$ is a finite set and $\sigma_f(\mc{E}_1)= \mc{E}_1$. 

For any wandering element $E$ of $\mc{E}$, it follows from Lemma \ref{lem:basic}.(2) that there exists a minimal integer $n_E\geq 1$ such that $\sigma_f^{n_E}(E)$ is full. Set 
\begin{equation}\label{eq:66}
\mc{E}_2=\bigcup_{E}\{ E, \sigma_f(E),...,\sigma_f^{n_E-1}(E)\},
\end{equation}
where the union is taken over all  non-full, wandering and critical elements of $\mc{E}$.
\begin{lem}\label{lem:essential}
Every essential element of $\mc{E}$ belongs to $\mc{E}_1\cup\mc{E}_2$. 
\end{lem}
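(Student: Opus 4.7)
The plan is to derive everything directly from Lemma \ref{lem:basic} together with the definitions of $\mc{E}_1$ and $\mc{E}_2$, treating essential elements by a simple dichotomy into preperiodic and wandering cases.

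First I would fix an essential element $E\in\mc{E}$ and immediately invoke Lemma \ref{lem:basic}.(3) to record that $E$ is simultaneously non-full and critical. This is the one ``real'' input; once it is in hand, the rest is bookkeeping against the definitions \eqref{eq:55} and \eqref{eq:66}.

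Next I would split into two cases according to whether $E$ is preperiodic or wandering under $\sigma_f$. In the preperiodic case, $E$ itself is a preperiodic critical element of $\mc{E}$, so writing $E=\sigma_f^{0}(E)$ places it in $\mc{E}_1$ by definition \eqref{eq:55}. In the wandering case, Lemma \ref{lem:basic}.(2) supplies a minimal $n_E\geq 1$ with $\sigma_f^{n_E}(E)$ full (minimality with $n_E\geq 1$ is ensured precisely because $E$ is non-full). Since $E$ is at the same time non-full, wandering and critical, it is one of the elements indexing the union in \eqref{eq:66}, and $E$ appears as its own initial term $\sigma_f^{0}(E)$ in the finite block $\{E,\sigma_f(E),\dots,\sigma_f^{n_E-1}(E)\}\subset\mc{E}_2$.

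The two cases exhaust all possibilities because every element of $\mc{E}$ is either preperiodic or wandering, so the conclusion $E\in\mc{E}_1\cup\mc{E}_2$ follows. There is no genuine obstacle here: the only step that uses dynamics rather than definitions is the appeal to Lemma \ref{lem:basic}.(3), which guarantees that essentiality forces both non-fullness and criticality, and Lemma \ref{lem:basic}.(2), which guarantees the existence of the integer $n_E$ in the wandering case; the parabolic auxiliary element $E_{x_*}\in\mc{E}_1$ plays no role in this lemma and need not be invoked.
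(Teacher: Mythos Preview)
Your proposal is correct and follows essentially the same approach as the paper: invoke Lemma~\ref{lem:basic}(3) to see that an essential $E$ is non-full and critical, then split into the preperiodic case ($E\in\mc{E}_1$) and the wandering case ($E\in\mc{E}_2$). The paper's proof is just a terser version of what you wrote.
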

\begin{proof}
Let $E\in\mc{E}$ be any essential element. Then $E$ is a non-full and critical element of $\mc{E}$ by Lemma \ref{lem:basic}.(3). Hence $E\in\mc{E}_1$ if $E$ is pre-periodic, and $E\in\mc{E}_2$ otherwise. 
\end{proof}

\subsection{Construction of puzzle pieces from $U$.}
 In this section, we construct puzzle pieces for the rational map $f$ from the Fatou domain $U$, and  study their basic properties. \vspace{5pt}

\noindent -- {\it Attracting puzzle pieces}\vspace{5pt}

Assume first that $U$ is an attracting domain of $f$. Let $U_0\subset U$ be a Jordan domain containing the attracting fixed point in $U$ such that  $\pa U_0\cap P_f=\emptyset$ and $\ol{f(U_0)}\subset U_0$. For each $n\geq 1$, let $U_n$ be the component of $f^{-n}(U_0)$ containing $U_0$. We can choose a large $N$  such that the following properties hold:
\begin{enumerate}
	\item $U_N$ contains all critical points in $U$ and the point $\infty$;
	\item  the elements of $\mc{E}_1\cup \mc{E}_2 $ are contained in distinct components of $\wh{\mathbb{C}}\setminus U_N$;
	\item each component of $\cbar\setminus U_N$ contains at most one critical element of $\mc{E}$;
	\item If $E\in \mc{E}_1\cup\mc{E}_2$ and $D$ is a component of $\wh{\mathbb{C}}\setminus U_N$ containing $E$, then $D\setminus\wh{E}$ contains no critical values.
\end{enumerate}

For $n\ge 0$, the {\bf puzzle} of {\bf depth} $n$, denote by $\mc{P}_n$, is the collection of components of $\wh{\mathbb{C}}\setminus U_{N+n}$.
 It is straightforward to check the following properties.
\begin{itemize}
\item The set $\mc{P}_n$ contains finitely many elements.
\item Each element of $\mc{P}_n$, which is called a {\bf puzzle piece} of depth $n$, is a closed Jordan disk with the boundary contained in $U$. 
\item For any $E\in \mc{E}$, 
\begin{equation}\label{eq:33}
\bigcap_{n\geq 0}P_n(E)=\wh{E}.
\end{equation}
where $P_n(E)$ is the unique puzzle piece in $\mc{P}_n$ containing $E$ for $n\ge 0$.
\end{itemize}

According to the  above  conditions on $U_N$, we immediately obtain the following  mapping properties between puzzle pieces.
\begin{pro}\label{pro:mapping}
Let  $E$ be any element of $\mc{E}$, and $n\geq 0$ be any integer.  
\begin{enumerate}
\item If $E$ is essential, then $f (P_{n+1}(E)\sm\wh{E})=P_n(\sigma_f(E))\sm\wh{\sigma_f(E)}$.
\item If $P_{n+1}(E)$ contains no essential elements of $\mc{E}$, then $P_{n+1}(E)$ is a component of $f^{-1}(P_n(\sigma_f(E)))$.
\item If $P_{n+1}(E)$ contains no critical elements of $\mc{E}$, then $f:P_{n+1}(E)\to P_n(\sigma_f(E))$ is a homeomorphism.
\end{enumerate}
\end{pro}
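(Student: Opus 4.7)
The plan is to deduce all three parts from the carefully chosen conditions (1)--(4) on $U_N$, exploiting a common mechanism: analyze the behavior of $f$ on the outer Jordan curve $\pa P_{n+1}(E)$, locate critical points using conditions (1) and (3), and compare $P_{n+1}(E)$ with the component of $f^{-1}(P_n(\sigma_f(E)))$ containing $E$. The common preliminary observation is that $\pa P_{n+1}(E)\subset \pa U_{N+n+1}\subset U$ avoids critical points of $f$ (those inside $U$ lie in $U_N\subset U_{N+n+1}$ by (1)), so $f$ is a local homeomorphism there and wraps this Jordan curve as a finite covering onto $\pa P_n(\sigma_f(E))$.

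For (3), the hypothesis combined with (1) gives that $P_{n+1}(E)$ contains no critical point of $f$ at all: those inside $U$ are in $U_N$, hence outside $P_{n+1}(E)$, while those outside $U$ lie inside critical elements of $\mc{E}$, which are absent from $P_{n+1}(E)$ by hypothesis. Granted (2), the map $f|_{P_{n+1}(E)}$ is then a proper unbranched covering between closed Jordan disks, forcing it to be a homeomorphism.

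For (2), the hypothesis ensures that $P_{n+1}(E)$ contains no preimage Fatou component of $U$, since any such component would sit inside $\wh{E'}$ for some essential $E'$. Consequently $f(P_{n+1}(E))\cap U_{N+n}=\emptyset$, and being connected and containing $\sigma_f(E)$ the image lies in $P_n(\sigma_f(E))$. That $P_{n+1}(E)$ is the entire connected component of $f^{-1}(P_n(\sigma_f(E)))$ then follows from the outer boundary observation: any extension past $\pa P_{n+1}(E)$ would enter $U_{N+n+1}$, whose $f$-image lies in $U_{N+n}$ and thus outside $P_n(\sigma_f(E))$.

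For (1) the situation is more delicate because essentiality of $E$ gives a Fatou preimage $V\subset \wh{E}$ of $U$, so that $f(P_{n+1}(E))$ properly contains $P_n(\sigma_f(E))$ (it already engulfs $U$). However, by condition (3) together with Lemma \ref{lem:basic}(3), $E$ is the unique essential element of $\mc{E}$ in $P_{n+1}(E)$, and excising $\wh{E}$ removes every preimage of $U$ from $P_{n+1}(E)$. The argument used for (2) then confines $f(P_{n+1}(E)\sm\wh{E})$ inside $P_n(\sigma_f(E))$. Since the outer boundary $\pa\wh{E}$ is accessible from $U$ within $\wh{\mathbb{C}}\sm\wh{E}$, its image $f(\pa\wh{E})$ is accessible from $f(U)=U$ and so must lie in $\pa\wh{\sigma_f(E)}$. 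A degree/surjectivity argument driven by the covering $f\colon\pa P_{n+1}(E)\to\pa P_n(\sigma_f(E))$ then upgrades the inclusion to the equality $f(P_{n+1}(E)\sm\wh{E})=P_n(\sigma_f(E))\sm\wh{\sigma_f(E)}$. The principal obstacle I anticipate in (1) is ruling out that some non-critical $E'\neq E$ inside $P_{n+1}(E)$ could satisfy $\sigma_f(E')=\sigma_f(E)$, which would let $f(\wh{E'})=\wh{\sigma_f(E')}=\wh{\sigma_f(E)}$ intrude into the region we wish to exclude; this point is controlled using Lemma \ref{lem:basic} (which gives $f(\wh{E'})=\wh{\sigma_f(E')}$ for non-essential $E'$) together with condition (4) on $U_N$.
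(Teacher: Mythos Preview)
The paper offers no argument beyond ``according to the above conditions on $U_N$, we immediately obtain the following mapping properties,'' so your outline is the proof the paper omits, and your overall mechanism (track $\partial P_{n+1}(E)$ as a covering of $\partial P_n(\sigma_f(E))$, then locate critical points and preimage components of $U$ via conditions (1)--(4)) is exactly right.

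There is, however, a genuine gap in your argument for (2). From the absence of essential elements you correctly deduce that $P_{n+1}(E)$ contains no Fatou component $V\neq U$ with $f(V)=U$. But this alone does \emph{not} give $f(P_{n+1}(E))\cap U_{N+n}=\emptyset$: a point $z\in U\cap P_{n+1}(E)$ could still satisfy $f(z)\in U_{N+n}$, which happens precisely when $f^{-1}(U_{N+n})\cap U$ has a component other than $U_{N+n+1}$. Such a stray component $W$ must contain a pole of $f$ (since $f\colon W\to U_{N+n}$ is proper and $\infty\in U_{N+n}$), and that pole lies in $U$; condition~(1) as written only puts $\infty$ itself, not the other poles in $U$, inside $U_N$. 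The fix is simply to enlarge $N$ so that $U_N$ contains every pole of $f$ lying in $U$ (a finite set); then any such $W$ would meet $U_N\subset U_{N+n+1}$, a contradiction. This is surely what the paper intends by ``choose a large $N$,'' but your write-up should make the point explicit.

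Your treatment of (1) is too sketchy at the very obstacle you flag. If some non-essential $E'\neq E$ inside $P_{n+1}(E)$ had $\sigma_f(E')=\sigma_f(E)$, then $f(\wh{E'})=\wh{\sigma_f(E)}$ would lie in $f(P_{n+1}(E)\setminus\wh E)$ and the asserted equality would fail outright. Condition~(4) controls critical \emph{values} in $P_0(E)\setminus\wh E$, which governs pullbacks \emph{toward} $E$, not forward images; together with Lemma~\ref{lem:basic} it does not visibly exclude such an $E'$. You should either supply an argument ruling out this configuration, or note that the paper only ever uses the inclusion $f(P_{n+1}(E)\setminus\wh E)\subset P_n(\sigma_f(E))$ (see the proof of Lemma~\ref{lem:key2}), which your boundary/no-essential-element argument does establish once the gap in (2) is closed.
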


\noindent -- {\it Parabolic puzzle pieces}\vspace{5pt}

Now suppose that $U$ is the parabolic domain of a parabolic fixed point $x_*\in\partial U$.   Let $U_0\subset U$ be a Jordan domain such that  
\[\text{$\pa U_0\cap \pa U=\{x_*\}, \pa U_0\cap P_f=\{x_*\}$ and $\ol{f(U_0)}\subset U_0\cup \{x_*\}$}.\]
For any $n\geq0$, let $U_n$ be the component of $f^{-n}(U_0)$ containing $U_0$. Then we can choose $N$ sufficiently large such that $U_N$ satisfies the same properties as  in the attracting cases. 

For $n\geq 0$, the (parabolic) {\bf puzzle} $\mc{P}_n$ of depth $n$ is the collection of components of $\mathbb{C}\sm U_{N+n}$, and its elements are called (parabolic) {\bf puzzle pieces} of level $n$.  For any $E\in\mc{E}$ and $n\geq0$, we still denote by $P_n(E)$  the unique puzzle piece of level $n$ containing $E$. The properties of parabolic puzzle pieces $P$ are as follows (compare with the attracting case).
\begin{itemize}
\item The set $\mc{P}_n$ contains finitely many elements.
\item Each element  $P\in \mc{P}_n$ is a full continuum. Moreover, $\partial P\cap J_f$ is a finite subset of $f^{-N-n}(x_*)$ and $\partial P\setminus J_f\subset U$.
\item For any $E\in \mc{E}$, 
\begin{equation}\label{eq:34}
\bigcap_{n\geq 0}P_n(E)=\wh{E}.
\end{equation}
\end{itemize}
Since $U_N$ satisfies the same properties in both attracting and parabolic cases, we have the following result.
\begin{pro}\label{pro:mapping2}
The conclusions in Proposition \ref{pro:mapping} still hold in the parabolic case.
\end{pro}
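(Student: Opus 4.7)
The strategy is to observe that the construction of parabolic puzzle pieces is combinatorially parallel to that of attracting puzzle pieces: both produce a nested family $U_N \subset U_{N+1} \subset \cdots$ with $f(U_{N+n+1}) = U_{N+n}$, and both yield puzzle pieces that are ``full'' on the Fatou side. The only genuinely new feature in the parabolic setting is that $\partial U_{N+n}$ meets $J_f$ at the finite set of pre-parabolic points in $f^{-N-n}(x_*)$, so parabolic puzzle pieces touch the Julia set along their boundaries at finitely many points. My plan is to carry over the proofs of (1)--(3) of Proposition \ref{pro:mapping} essentially verbatim, checking at each step that these pre-parabolic boundary contacts cause no trouble.

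First I would verify the mapping diagram: as in the attracting case, $U_{N+n+1}$ is a component of $f^{-1}(U_{N+n})$, hence $f\colon U_{N+n+1}\to U_{N+n}$ is a proper surjective branched covering. Taking complements, for any puzzle piece $P\in\mc{P}_{n+1}$ there is a unique $P'\in\mc{P}_n$ with $f(P)\subset P'$ and $f(\partial P)\subset\partial P'$. Because $x_*$ is a non-critical parabolic fixed point, $f$ acts injectively near each point of $f^{-1}(x_*)$, so the pre-parabolic points of $\partial P \cap J_f \subset f^{-N-n-1}(x_*)$ are sent bijectively onto pre-parabolic points of $\partial P'\cap J_f\subset f^{-N-n}(x_*)$, with no identifications. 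Thus the boundary of a parabolic puzzle piece, viewed away from its finitely many Julia-set pinch points, behaves exactly like the boundary of an attracting puzzle piece.

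For statement (3), assume $P_{n+1}(E)$ contains no critical element of $\mc{E}$. Conditions (2)--(4) in the choice of $U_N$ then force $P_{n+1}(E)$ to contain no critical point of $f$, so $f\colon P_{n+1}(E)\to P_n(\sigma_f(E))$ is a proper unramified continuous surjection between full continua. Applying the Riemann--Hurwitz formula to the (punctured) interiors, together with the above bijection on the pre-parabolic boundary, forces the degree to be one, which promotes the map to a homeomorphism. Statement (2) follows from the same analysis without the injectivity requirement: the absence of essential elements in $P_{n+1}(E)$ guarantees that $P_{n+1}(E)$ coincides with a single component of $f^{-1}(P_n(\sigma_f(E)))$.

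Finally, for statement (1) in the essential case, condition (4) on $U_N$ ensures that $P_n(\sigma_f(E))\sm\wh{\sigma_f(E)}$ contains no critical values of $f$. Hence its preimage splits into finitely many unramified covering sheets over $P_n(\sigma_f(E))\sm\wh{\sigma_f(E)}$; the sheet containing $E\sm\wh{E}$ is $P_{n+1}(E)\sm\wh{E}$, and $f$ maps it surjectively onto $P_n(\sigma_f(E))\sm\wh{\sigma_f(E)}$. The main obstacle I anticipate is the bookkeeping at the pre-parabolic boundary points; this is controlled by the preparation done in the first step, and in particular by the choice $\partial U_0\cap P_f=\{x_*\}$, which ensures that all pre-parabolic points of $\partial U_{N+n+1}$ are regular points of $f$ whose images land on corresponding boundary points of $P_n(\sigma_f(E))$ without collapsing or branching.
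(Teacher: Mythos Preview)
Your approach matches the paper's: the paper's own proof is literally the single sentence ``Since $U_N$ satisfies the same properties in both attracting and parabolic cases, we have the following result,'' so you are carrying out in detail exactly what the paper leaves implicit.

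One logical slip to flag: you write that ``because $x_*$ is a non-critical parabolic fixed point, $f$ acts injectively near each point of $f^{-1}(x_*)$.'' That inference is wrong; $f'(x_*)\neq 0$ tells you only that $x_*$ itself is non-critical, not that every preimage of $x_*$ is. A preimage of $x_*$ lying on $\partial P_{n+1}(E)\cap J_f$ could in principle be a critical point of $f$. This does not actually damage your argument, but the correct justification is different: any such pre-parabolic point lies in $\partial U$ (it is a Julia point in $\overline{U_{N+n+1}}\subset\overline U$), hence lies in some $E'\in\mc E$, and connectedness forces $E'\subset P_{n+1}(E)$. So if that point were critical, $E'$ would be a critical element inside $P_{n+1}(E)$, contradicting the hypothesis of (3). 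For (1) and (2) you never need injectivity at those points. With this correction your outline is sound.
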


\subsection{Two reduction lemmas}
In this part, we present two lemmas  which together easily imply Theorem \ref{thm:main2}. For any subset $\mc{E}'$ of $\mc{E}$ and any $n\geq0$, denote by $\mc{P}_n(\mc{E'})$ the collection of level-$n$ puzzle pieces that contain elements of $\mc{E}'$. Note that $\mc{P}_n=\mc{P}_n(\mc{E})$. 

\begin{lem}\label{lem:key1}
Let $E$ be an element of $\mc{E}$. Suppose that there exists an increasing sequence $\{n_k\}_{k\geq1}$  such that $f^{n_k}(P_{n_k}(E))\in \mc{P}_0\sm \mc{P}_0(\mc{E}_1)$ for every $k\geq1$.  Then $E$ is a singleton, provided that each wandering exposed Julia component of $f$ containing critical points is non-recurrent.
\end{lem}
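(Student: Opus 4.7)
The plan is to show that $\wh{E}$ is a single point; since $E$ is the outer boundary component of $\wh{E}$ in $\partial U$, this forces $E$ to be a singleton. By \eqref{eq:33} (or \eqref{eq:34}), $\bigcap_{n\ge 0} P_n(E) = \wh{E}$, so it suffices to establish
\[
\mathrm{mod}\bigl(P_{n_k}(E) \setminus \wh{E}\bigr) \longrightarrow \infty \quad \text{as } k \to \infty.
\]
The strategy couples a bounded-degree pullback via $f^{n_k}$ with the non-recurrence hypothesis, in the style of the moduli arguments of Branner--Hubbard.

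\smallskip

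\textbf{Step 1: fix a target puzzle piece.} Since $\mc{P}_0$ is finite, after passing to a subsequence I may assume $Q := f^{n_k}(P_{n_k}(E)) = P_0(\sigma_f^{n_k} E)$ is a single fixed element of $\mc{P}_0 \setminus \mc{P}_0(\mc{E}_1)$ for every $k$. Since $\mc{E}_1$ is $\sigma_f$-invariant, the entire orbit $\{E_i := \sigma_f^i(E)\}_{i\ge 0}$ avoids $\mc{E}_1$; in particular, every critical element of $\mc{E}$ sitting in any $P_{n_k-i}(E_i)$ (and in $Q$ itself) is wandering, exposed, and hence non-recurrent by hypothesis.

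\smallskip

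\textbf{Step 2: bound the degree.} I would show that $\deg(f^{n_k}\colon P_{n_k}(E) \to Q) \le D$ for a constant $D$ independent of $k$. By Propositions~\ref{pro:mapping} and~\ref{pro:mapping2}, the intermediate map $f\colon P_{n_k-i}(E_i) \to P_{n_k-i-1}(E_{i+1})$ is a homeomorphism whenever $P_{n_k-i}(E_i)$ carries no critical element of $\mc{E}$; otherwise its degree is at most $\deg f$, and the defining properties of $U_N$ ensure at most one critical element of $\mc{E}$ per puzzle piece. The case where $E$ is preperiodic is disposed of directly: preperiodic critical $E$ would force $E_{n_k} \in \mc{E}_1$ (contradiction), while preperiodic non-critical $E$ yields eventually a homeomorphism $f^p\colon P_{N+p}(E_n) \to P_N(E_n)$, and a Schwarz--Pick pullback collapses $\wh{E_n}$ to a repelling periodic point, which in turn makes $E$ a singleton. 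So I may assume $E$ wanders, the $E_i$ are distinct, and since each critical element of $\mc{E}$ contains at least one of the $\le 2d-2$ critical points of $f$, the orbit visits critical elements of $\mc{E}$ at most $2d-2$ times; each such visit contributes a factor of at most $\deg f$ to the total degree, giving $D \le (\deg f)^{2d-2}$.

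\smallskip

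\textbf{Step 3: modulus blow-up via non-recurrence.} The bounded-degree pullback together with the Gr\"otzsch/Riemann--Hurwitz inequality gives
\[
\mathrm{mod}\bigl(P_{n_k}(E) \setminus \wh{E}\bigr) \;\ge\; \frac{1}{D}\,\mathrm{mod}\bigl(Q \setminus \wh{E_{n_k}}\bigr),
\]
so it remains to show $\mathrm{mod}(Q \setminus \wh{E_{n_k}}) \to \infty$. Inside $Q$ lie only finitely many critical elements of $\mc{E}$, all of them wandering and exposed, hence non-recurrent. For each such critical $C \subset Q$, non-recurrence supplies a depth $N_C$ with $P_{N_C}(C) \cap \sigma_f^j(C) = \emptyset$ for every $j \ge 1$, producing an embedded annulus $P_{N_C-1}(C) \setminus P_{N_C}(C) \subset Q \setminus \wh{C}$ of definite positive modulus which the forward $\sigma_f$-orbit of $C$ avoids. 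Telescoping the pullbacks of these annuli along the orbits of $E$ and each $C$ by means of Gr\"otzsch's inequality, while using that the distinct elements $E_{n_k}$ must fall into deeper and deeper subpieces of $Q$ (by finiteness of $\mc{P}_n\cap \{\text{pieces inside }Q\}$ at each level and the wandering property), yields a summable sequence of disjoint annuli whose moduli diverge.

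\smallskip

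\textbf{Expected main obstacle.} The delicate part is Step~3: turning the non-recurrence of the finitely many wandering critical elements in $Q$ into an \emph{unbounded} nest of disjoint definite-modulus annuli around $\wh{E_{n_k}}$. The combinatorial bookkeeping of which intermediate puzzle pieces $P_{n_k-i}(E_i)$ carry critical elements, and how the pulled-back annuli coming from distinct critical components $C$ can be organized to nest without overlap, is the main technical issue. The hypothesis ``$f^{n_k}(P_{n_k}(E)) \in \mc{P}_0 \setminus \mc{P}_0(\mc{E}_1)$ for infinitely many $k$'' is precisely what is needed here: it forces the orbit of $E$ to return infinitely often to a controlled scale on which non-recurrent critical behaviour, rather than the unmanageable $\mc{E}_1$-type recurrence, governs the geometry.
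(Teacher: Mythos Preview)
Your overall architecture (bounded-degree pullback, then a shrinking/modulus argument) matches the paper's, but there is a genuine gap in Step~2 and a misdirected Step~3.

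\textbf{The degree bound in Step 2 is not justified.} You argue that because the $E_i=\sigma_f^i(E)$ are distinct and there are at most $2d-2$ critical elements of $\mc{E}$, ``the orbit visits critical elements at most $2d-2$ times.'' But the degree of $f\colon P_{n_k-i}(E_i)\to P_{n_k-i-1}(E_{i+1})$ exceeds $1$ whenever the \emph{puzzle piece} $P_{n_k-i}(E_i)$ contains a critical element $C$---not merely when $E_i$ \emph{equals} $C$. A wandering $E$ can have its orbit accumulate on a single critical element $C$, so that $C\subset P_{n_k-i}(E_i)$ for many different $i$ (at varying depths $n_k-i$), and your count gives no bound. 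The paper handles this precisely with its Claim~2: using the non-recurrence hypothesis (first transferred from Julia components to elements of $\mc{E}$ in Claim~1, then absorbed into the choice of $N$ so that $\sigma_f^n(C)\cap P_0(C)=\emptyset$ for all $n\ge 1$), one shows that if $C\subset P_{n_k-j_1}(E_{j_1})$ and $C\subset P_{n_k-j_2}(E_{j_2})$ with $j_1>j_2$, then $\sigma_f^{\,j_1-j_2}(C)\subset P_0(C)$, a contradiction. In other words, non-recurrence is exactly what buys the degree bound; you cannot defer it to Step~3.

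\textbf{Step 3 is aimed at the wrong target.} Granting a uniform bound $D$, your inequality $\mathrm{mod}\bigl(P_{n_k}(E)\setminus\wh{E}\bigr)\ge D^{-1}\,\mathrm{mod}\bigl(Q\setminus\wh{E_{n_k}}\bigr)$ is correct, but reducing to $\mathrm{mod}(Q\setminus\wh{E_{n_k}})\to\infty$ asks for the \emph{images} $\wh{E_{n_k}}$ to shrink inside the fixed piece $Q$. There is no a priori reason for this: $Q$ may contain infinitely many elements of $\mc{E}$ of diameter bounded below, and the sketched ``telescoping of pulled-back annuli'' does not produce annuli around $\wh{E_{n_k}}$, only around the critical elements $C$. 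The paper avoids this entirely: once the degree bound is in hand, it enlarges each $P\in\mc{P}_0\setminus\mc{P}_0(\mc{E}_1)$ to a Jordan disk $D_P$ with $D_P\setminus P\subset U$ and $(D_P\setminus P)\cap P_f=\emptyset$, and applies the Shrinking Lemma to the pair $(D_P,P)$. This yields $\mathrm{diam}\,P_{n_k}(E)\to 0$ directly, with no need to control the images $E_{n_k}$.
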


The proof relies on the following Shrinking Lemma, see \cite[Section 12]{lyubich1997laminations}.

\begin{lem}[Shrinking Lemma]\label{Shrinking}
Let $D_0\subset \wh{\mathbb{C}}$ be a Jordan disk which is not contained in any rotation domain of $f$, and $\{D_n\}_{n\geq 1}$ be a sequence of domains such that $D_n$ is a component of $f^{-1}(D_{n-1})$. If $\deg (f^n: D_n\to D_0)\leq M$ for every $n\geq 1$, then for any compact set $K\subset D_0$, diam$(K_n)\to 0$  as $n\to\infty$, where $K_n=f^{-n}(K)\cap D_n$.
\end{lem}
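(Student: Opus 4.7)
The plan is to prove the statement by contradiction through a normal-families extraction, using the hypothesis on $D_0$ to rule out the limiting configuration.

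As a first step, I would enlarge $D_0$ to a slightly larger Jordan disk $\widetilde D_0 \Supset \overline{D_0}$, chosen so that $\widetilde D_0$ itself is still not contained in any rotation domain (possible because the rotation-domain cycles are finite in number and $D_0$ already avoids them) and so that the annulus $\widetilde D_0 \setminus \overline{D_0}$ has some definite modulus $m_0 > 0$. Letting $\widetilde D_n$ denote the component of $f^{-1}(\widetilde D_{n-1})$ that contains $D_n$, I need a uniform degree bound $\deg(f^n:\widetilde D_n \to \widetilde D_0) \leq M'$ for some $M'\geq M$; this is the main technical obstacle and is obtained by a careful perturbation of $\widetilde D_0$ (keyed on the fact that only finitely many critical orbits pass through the thin annular collar $\widetilde D_0\setminus D_0$ up to any finite time, combined with a diagonal extraction). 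Granted this, the modulus inequality for degree-$M'$ branched covers yields the uniform collar bound $\mathrm{mod}(\widetilde D_n\setminus \overline{D_n})\geq m_0/M'$ for every $n$.

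Next, suppose for contradiction that for some compact $K\subset D_0$ there are $\epsilon>0$ and a subsequence $(n_k)$ with $\mathrm{diam}(K_{n_k})\geq \epsilon$. The collar above forces the spherical diameter of $\widetilde D_{n_k}$ to stay bounded away from zero. Uniformize each $\widetilde D_{n_k}$ by a Riemann map $\phi_k:\mathbb{D}\to \widetilde D_{n_k}$ with $\phi_k(0)\in D_{n_k}$, normalized in a spherical sense; passing to a subsequence, $\phi_k \to \phi$ uniformly on compacta of $\mathbb{D}$, and the collar estimate forces the limit $\phi:\mathbb{D}\to \hat{\mathbb{C}}$ to be non-constant. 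Similarly, the proper maps $h_k := f^{n_k}\circ \phi_k:\mathbb{D}\to \widetilde D_0$ are branched covers of degree at most $M'$, hence form a normal family and converge along a further subsequence to a holomorphic $h:\mathbb{D}\to \overline{\widetilde D_0}$, which remains non-constant by Hurwitz together with the degree count (each $h_k$ is surjective onto $\widetilde D_0$).

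The final step is to identify the limit. Since $f^{n_k}$ maps a neighborhood of $\phi(\mathbb{D})$ into the fixed disk $\widetilde D_0$, the iterates $\{f^{n_k}\}$ form a normal family on $\phi(\mathbb{D})$, which therefore lies in the Fatou set. A non-constant limit of iterates on a Fatou component is impossible on a (super)attracting or parabolic component---there iterates converge to a constant attracting cycle or parabolic fixed point---while wandering components are excluded by Sullivan's theorem. Hence the Fatou component $U$ containing $\phi(\mathbb{D})$ must be a rotation domain. But then, because $f^{n_k}(\widetilde D_{n_k})=\widetilde D_0$ and $\widetilde D_{n_k}$ accumulates onto $\phi(\mathbb{D})\subset U$, the target disk $\widetilde D_0$ is forced to lie in the same rotation cycle, giving $D_0\subset \widetilde D_0\subset U$ and contradicting the hypothesis.
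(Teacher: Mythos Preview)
The paper does not prove this lemma: it is quoted from \cite[Section~12]{lyubich1997laminations} and used as a black box in the proof of Lemma~\ref{lem:key1}. There is therefore no in-paper argument to compare your proposal against.

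Your strategy is the standard one (a normal-families extraction followed by the Fatou classification to force a rotation domain), but two steps are not justified as written. The serious one is the enlargement $D_0\subset\widetilde D_0$ together with a uniform bound $\deg(f^n:\widetilde D_n\to\widetilde D_0)\le M'$: the postcritical set $\bigcup_{j\ge 0}f^j(V_f)$ may well be dense in $J_f$, so no choice of collar $\widetilde D_0\setminus\overline{D_0}$ avoids it for all $n$, and the pullback degrees can genuinely blow up. The customary remedy is to go \emph{inward} rather than outward: choose a Jordan disk $D'$ with $K\subset D'\Subset D_0$, so that the annulus $D_0\setminus\overline{D'}$ pulls back inside $D_n$ with modulus at least $\mathrm{mod}(D_0\setminus\overline{D'})/M$ around each component of $f^{-n}(\overline{D'})\cap D_n$; this collar, which costs nothing in degree, is what feeds the normal-families step. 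The second issue is your last line: from $\phi(\mathbb{D})\subset U$ and $f^{n_k}(\widetilde D_{n_k})=\widetilde D_0$ you only obtain that $\widetilde D_0$ \emph{meets} the rotation cycle, not that $D_0$ is contained in a rotation domain, so the contradiction with the hypothesis on $D_0$ is not yet reached. Closing this requires an additional argument (for instance, tracking a Julia point inside each $D_{n_k}$ through the limit, or analyzing the limit map $h$ as a rotation of $U$ more carefully).
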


\begin{proof}[Proof of Lemma \ref{lem:key1}]
Suppose that every wandering exposed Julia component containing critical points is non-recurrent.
Then \vspace{3pt}

\emph{Claim 1. Each wandering critical element of  $\mc{E}$ is non-recurrent under $\sigma_f$.}\vspace{3pt}

To prove this claim, let $E'$ be a wandering critical element of $\mc{E}$. If $E'$ is of full-type, then it is a Julia component containing critical points. So $E'$ is non-recurrent by the assumption. If $E'$ is not full, by Lemma \ref{lem:basic} (2) and (4), the orbit of $E$ contains an element $E''$ of $\mc{E}$, such that the unique Julia component $J$ containing $E''$ contains critical points of $f$. Since $J$ is assumed to be non-recurrent. It follows that $E''$, and hence $E'$, is non-recurrent.  Then Claim 1 is proved.

By Claim 1, we can assume that, for each wandering critical element $E'$ of $\mc{E}$, its orbit $\{\sigma_f^n(E')\}_{n\geq1}$ is disjoint from $P_0(E')$. According to Propositions \ref{pro:mapping} and \ref{pro:mapping2}, if there exists an increasing sequence $\{n_k\}_{k\geq1}$  such that $f^{n_k}(P_{n_k}(E))\in \mc{P}_0\sm \mc{P}_0(\mc{E}_1)$ for every $k\geq1$, then 
\begin{itemize}
\item [(a)] for any $0\leq j\leq n_k-1$, $P_{n_k-j}(\sigma_f^j(E))$ contains no essential elements of $\mc{E}$; 
\item [(b)] for any $k\geq1$ and $0\leq j\leq n_k$, it holds that $f^j(P_{n_k}(E))=P_{n_k-j}(\sigma_f^j(E))$;
\item [(c)] for any $k\geq 1$, $P_{n_k}(E)$ is a component of $f^{-n_k}(P)$ for some $P\in\mc{P}_0\sm \mc{P}_0(\mc{E}_1)$. 
\end{itemize}

\emph{Claim 2. For any $k\geq1$, the finite orbit $\{f^j(P_{n_k}(E)),j=0,\ldots,n_k\}$ meets each critical element of $\mc{E}$ at most once.}\vspace{3pt}

To prove this claim, note first that for any $j=0,\ldots,n_k$, the puzzle piece $f^j(P_{n_k}(E))$ do not meet any preperiodic critical element of $\mc{E}$: otherwise $f^{n_k}(P_{n_k}(E))=P_0(\sigma_f^{n_k}(E))$ would belong to $\mc{P}_0(\mc{E}_1)$, a contradiction to statement (c) above.

Suppose on the contrary that the orbit $\{P_{n_k-j}(\sigma_f^j(E)),j=0,\ldots,n_k\}$ meet a wandering critical element $E'$ at least twice, for example, $P_{n_k-j}(\sigma_f^j(E))=P_{n_k-j}(E')$ for $j=j_1,j_2$ with $j_1>j_2\geq 0$. Then $\sigma_f^{j_1-j_2}(E')\in P_0(E')$. This contradicts the assumption that the orbit $\{\sigma_f^n(E')\}_{n\geq1}$ is disjoint from $P_0(E')$. So Claim 2 is proved. 
\vspace{5pt}

By Claim 2 and Propositions \ref{pro:mapping}, \ref{pro:mapping2}, we obtain that 
\begin{equation}\label{eq:44}
{\rm deg}(f^{n_k}: P_{n_k}(E)\to P_0(\sigma_f^{n_k}(E)))\leq d^{m_0}
\end{equation}
for any $k\geq1$, where $m_0$ denotes the number of wandering critical elements of $\mc{E}$. 

For any puzzle piece $P\in\mc{P}_0\sm\mc{P}_0(\mc{E}_1)$, since $P$ is a component of $\widehat{\mathbb C}\setminus U_{N}$, we can choose a Jordan disk $D_P\supset P$ such that $D_P\setminus P\subset U$. As $P$ does not contain the parabolic fixed point $x_*$, by shrinking $D_P$, we can assume that $D_P\setminus P$ is disjoint from $P_f$.  Thus, if $f^{n_k}(P_{n_k}(E))=P$, then  $${\rm deg}(f^{n_k}:D_P^{n_k}\to D_P)\leq d^{m_0}$$ by \eqref{eq:44}, where $D_P^{n_k}$ is the component of $f^{-n_k}(D_P)$ that contains $P_{n_k}(E)$. As a consequence, we can apply Shrinking Lemma to $(D_P,P)$ for all $P\in \mc{P}_0\sm\mc{P}_0(\mc{E}_1)$, and deduce that $E$ is a singleton.
\end{proof}

\begin{lem}\label{lem:key2}
Let $E$ be an element of $\mc{E}$ such that the orbit of $E$ does not accumulate to any element in $\mc{E}_2$.
Then there exists a sequence of integers $\{n_k\}$ satisfying the property of Lemma \ref{lem:key1} for $E$.
\end{lem}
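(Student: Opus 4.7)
The plan is to argue by contradiction: suppose no sequence $\{n_k\}$ with the desired property exists. Then for every sufficiently large $n$, $f^n(P_n(E))$ either fails to be a depth-$0$ puzzle piece (because the clean mapping identity from Propositions~\ref{pro:mapping} and~\ref{pro:mapping2} breaks at some first step $j_n \le n-1$, which forces the puzzle piece $P_{n-j_n}(\sigma_f^{j_n}(E))$ to contain an essential element of $\mc{E}$), or it lies in $\mc{P}_0(\mc{E}_1)$. By Lemma~\ref{lem:essential}, the blocking essential always belongs to $\mc{E}_1 \cup \mc{E}_2$, so in every case I can attach to $n$ a pair $(j_n, E_n^*) \in \{0,\ldots,n\} \times (\mc{E}_1 \cup \mc{E}_2)$ with $P_{n-j_n}(\sigma_f^{j_n}(E)) = P_{n-j_n}(E_n^*)$, taking $j_n = n$ and $E_n^* \in \mc{E}_1$ in the second case.

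The next step is a pigeonhole argument: since $\mc{E}_1 \cup \mc{E}_2$ is finite, there is a subsequence $\{n_k\}$ along which $E_{n_k}^* = E^*$ is constant. Setting $d_k = n_k - j_{n_k}$ and passing to a further subsequence, if $d_k \to \infty$ then the inclusions $\sigma_f^{j_{n_k}}(E) \subset P_{d_k}(E^*)$, together with the shrinking identities \eqref{eq:33} and \eqref{eq:34}, drive the iterates $\sigma_f^{j_{n_k}}(E)$ into arbitrarily small neighbourhoods of $\wh{E^*}$, so that the orbit of $E$ accumulates to $E^*$. If $E^* \in \mc{E}_2$, this contradicts the hypothesis and finishes the proof. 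If $d_k$ remains bounded, one iterates the pigeonhole at each successive depth inside $P_{d_k}(E^*)$ and diagonalises, using finiteness of the puzzle and wandering of the iterates of $E$, to extract accumulation to some element of $\mc{E}$ lying in $P_{d_k}(E^*)$; the conditions on $U_N$ restricting $\mc{E}_1 \cup \mc{E}_2$ to distinct depth-$0$ puzzle pieces then constrain the possible accumulation target and reduce the analysis to the unbounded case.

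The main obstacle is completing the contradiction when the accumulation target $E^*$ lies in $\mc{E}_1$ rather than in $\mc{E}_2$. The plan here is to propagate the accumulation forward via continuity of $f$: accumulation of $\sigma_f^{j_{n_k}}(E)$ to $E^*$ implies accumulation of $\sigma_f^{j_{n_k}+\ell}(E)$ to $\sigma_f^\ell(E^*)$ for every $\ell \ge 0$, so the orbit of $E$ accumulates to the entire forward $\sigma_f$-orbit of $E^*$ inside the finite, $\sigma_f$-invariant set $\mc{E}_1$. One then invokes Lemma~\ref{lem:basic}(3) together with the observation that the filling $\wh{E^*}$ of an essential preperiodic element contains a critical point; chasing the forward $f$-orbit of this critical point until it lands in a wandering critical Julia component—which by the definition of $\mc{E}_2$ belongs to the $\sigma_f$-orbit of an element of $\mc{E}_2$—produces the desired accumulation of the $E$-orbit to an element of $\mc{E}_2$, contradicting the hypothesis. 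The central technical difficulty, precisely the combinatorial complexity emphasised in the introduction, is the bookkeeping of this propagation in the presence of puzzle pieces containing essential elements, where $f$ does not map puzzle pieces onto single puzzle pieces and the otherwise-clean identification of $f^n(P_n(E))$ with $P_0(\sigma_f^n(E))$ is lost.
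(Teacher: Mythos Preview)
Your contradiction argument has a genuine gap in the case $E^*\in\mc{E}_1$, and this gap cannot be repaired along the lines you sketch.

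The hypothesis of the lemma forbids accumulation only to elements of $\mc{E}_2$; accumulation to elements of $\mc{E}_1$ is perfectly allowed, and indeed is exactly what happens in Case~2 of the paper's own proof. So discovering that the orbit of $E$ accumulates to some $E^*\in\mc{E}_1$ is not a contradiction of anything, and your plan of ``chasing the forward $f$-orbit of this critical point until it lands in a wandering critical Julia component'' does not work. Elements of $\mc{E}_1$ are by definition preperiodic under $\sigma_f$, and $\sigma_f(\mc{E}_1)=\mc{E}_1$, so the $\sigma_f$-orbit of $E^*$ stays inside $\mc{E}_1$ forever and never meets $\mc{E}_2$. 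The critical point in $\wh{E^*}$ need not lie in any wandering Julia component (it may lie in a preperiodic Julia component, or in a Fatou domain inside $\wh{E^*}$), and in any case its location has no bearing on where the $\sigma_f$-orbit of $E$ accumulates. Concretely, $\mc{E}_2$ may well be empty while $\mc{E}_1$ is not, in which case the hypothesis of the lemma is vacuous and your whole chain of implications has nowhere to land.

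What is actually needed, and what the paper does, is a \emph{direct construction} in the accumulation-to-$\mc{E}_1$ case. The key device is the set $\mc{E}_1':=\sigma_f^{-1}(\mc{E}_1)\setminus\mc{E}_1$: for each $k$ one takes the \emph{first} time $n_k$ at which $\sigma_f^{n_k}(E)$ enters $\mc{P}_k(\mc{E}_1)$, so that at time $n_k-1$ the orbit sits in $\mc{P}_1(\mc{E}_1')\subset\mc{P}_0\setminus\mc{P}_0(\mc{E}_1)$. One then runs a backward induction, using the $\sigma_f$-invariance of $\mc{E}_1$, to show $P_i(\sigma_f^{n_k-1-i}(E))\notin\mc{P}_i(\mc{E}_1\cup\mc{E}_2)$ for every $i$, whence no essential element obstructs the identity $f^{n_k-1}(P_{n_k-1}(E))=P_0(\sigma_f^{n_k-1}(E))\in\mc{P}_0\setminus\mc{P}_0(\mc{E}_1)$. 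Your pigeonhole/contradiction framework does not see this ``first entry time'' structure, and the bounded-$d_k$ subcase you describe (``iterate the pigeonhole and diagonalise'') is too vague to stand in for it.
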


\begin{proof}
\emph{Case 1. The orbit of $E$ does not accumulate to elements of $\mc{E}_1$}\vspace{3pt}
 
By enlarging $N$ if necessary,  we may assume that the orbit of $E$ is disjoint from the elements of $\mc{P}_0(\mc{E}_1\cup\mc{E}_2)$. Since $\mc{E}_1\cup\mc{E}_2$ contains all essential elements of $\mc{E}$ (by Lemma \ref{lem:essential}), it follows that the puzzle piece $P_n(\sigma_f^k(E))$ contains no essential elements of $\mc{E}$ for any $n,k\geq0$.
 By Propositions \ref{pro:mapping} and \ref{pro:mapping2}, we obtain that $f^n(P_n(E))\in\mc{P}_0\sm\mc{P}_0(\mc{E}_1)$ for any $n\geq1$. Then the sequence $\{n\}_{n\geq1}$ satisfies the condition of Lemma \ref{lem:key1}.\vspace{3pt}
 
 \emph{Case 2. The orbit of $E$ accumulates to elements of $\mc{E}_1$.}\vspace{3pt}
 
Set 
$$\mc{E}'_1=\sigma_f^{-1}(\mc{E}_1)\sm\mc{E}_1. $$
By enlarging $N$ if necessary, we may assume that  $\mc{P}_0(\mc{E}'_1)\cap \mc{P}_0(\mc{E}_1\cup \mc{E}_2)=\emptyset$, $P_0(E)\not\in \mc{P}_0(\mc{E}_1)$ and the orbit of $E$ is disjoint from  $ \mc{P}_0(\mc{E}_2)$.

Since the orbit of $E$ accumulates to $\mc{E}_1$, there exists a minimal integer $n_1\geq 1$  such that $\sigma_f^{n_1}(E)$ enters an element of $\mc{P}_{1}(\mc{E}_1)$. Due to the minimum of $n_1$,  the set $\sigma_f^{n_1-1}(E)$ is not contained in any element of $\mc{P}_1(\mc{E}_1)$. Then Propositions \ref{pro:mapping} and  \ref{pro:mapping2} imply that  $\sigma_f^{n_1-1}(E)$ lies in an element of $\mc{P}_1(\mc{E}_1')$. Hence 
\begin{equation}\label{eq:77}
P_0(\sigma_f^{n_1-1}(E))\not\in \mc{P}_0(\mc{E}_1).
\end{equation}

From \eqref{eq:77} and the $\sigma_f$-invariance of $\mc{E}_1$, we can deduce that
$$
P_1(\sigma_f^{n_1-2}(E))\notin \mc{P}_1(\mc{E}_1). 
$$
To see this, suppose on the contrary that $P_1(\sigma_f^{n_1-2}(E))=P_1(E')$ for some $E'\in \mc{E}_1$. Then $\sigma_f^{n_1-2}(E)\subset P_1(E')\sm\wh{E'}$, and thus $\sigma_f^{n_1-1}(E)\subset P_0(\sigma_f(E'))$ due to Proposition \ref{pro:mapping}.(1),(2) and Proposition \ref{pro:mapping2}. Since  $\sigma_f(E')\in  \mc{E}_1$,  it follows that $\sigma_f^{n_1-1}(E)$ is contained in an element of $\mc{P}_0(\mc{E}_1)$, a contradiction.   See Figure \ref{fig:key}. 

\begin{figure}[htbp]
\centering
\includegraphics[width=14cm]{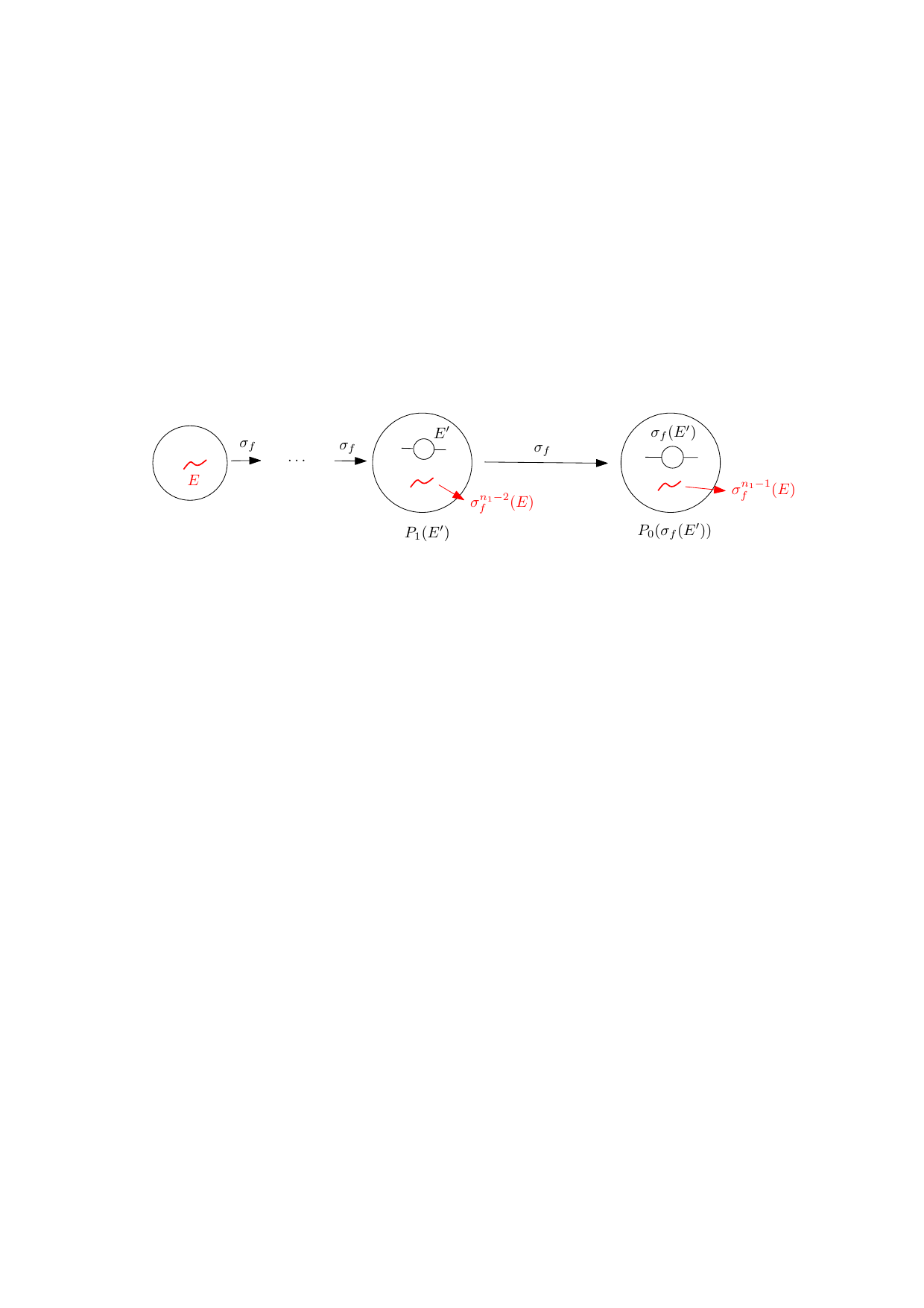}
\caption{}
\label{fig:key}
\end{figure}

Inductively using the above argument, we obtain that 
$$
P_i(\sigma_f^{n_1-(i+1)}(E))\notin \mc{P}_i(\mc{E}_1),\ i=1,2,\cdots,n_1-1.$$ 
Since the orbit of $E$ avoids $\mc{P}_0(\mc{E}_2)$, we also have that 
$$P_i(\sigma_f^{n_1-(i+1)}(E))\notin \mc{P}_i(\mc{E}_2),\ i=1,2,\cdots,n_1-1.$$
Note that the essential elements of $\mc{E}$ are contained in $\mc{E}_1\cup\mc{E}_2$. 
Then $P_i(\sigma_f^{n_1-(i+1)}(E))$ contains no essential elements of $\mc{E}$ for every $i=1,\ldots, n_1-1$. It follows from Propositions \ref{pro:mapping} and \ref{pro:mapping2} that $f^{n_1-1}(P_{n_1-1}(E))\in \mc{P}_0\sm\mc{P}_0(\mc{E}_1)$.

For any $k\geq 1$, there exists a minimal integer $n_k$ such that $\sigma_f^{n_k}(E)$ lies in an element  of $\mc{P}_{k}(\mc{E}_1)$.  By taking a subsequence, we may assume that $n_k$ is increasing. 
With the same argument as above, we can prove that  $f^{n_k-1}(P_{n_k-1}(E))\in \mc{P}_0\sm\mc{P}_0(\mc{E}_1)$. Thus, the sequence $\{n_k\}_{k\geq1}$ is as required.
\end{proof}

\subsection{Wandering exposed components are singletons}
\begin{proof}[Proof of Theorem\ref{thm:main2}]
Let $J$ be any wandering exposed Julia component of $f$. Without loss of generality, we can assume that $J$ contains a wandering component $E_*$ of $\partial U$, where $U$ is a fixed Fatou domain of $f$ and $\infty\in U$. Then $J\subset \wh{E}_*$. Hence it suffices to prove that $E_*$ is a singleton. 

We first show that $\mc{E}_2=\emptyset$. Suppose on the contrary that $\mc{E}_2\not=\emptyset$. By Claim 1 in the proof of Lemma \ref{lem:key1}, each critical component of $\partial U$ is non-recurrent. Hence, there exists an element $E$ of $\mc{E}_2$ whose orbit does not accumulate to any element of $\mc{E}_2$.  Applying Lemmas \ref{lem:key1} and \ref{lem:key2} to $E$ immediately implies that $E$ is a singleton, which contradicts the definition of $\mc{E}_2$.

Since $\mc{E}_2=\emptyset$, the condition of Lemma \ref{lem:key2} is automatically satisfied. Then there exists a sequence of integers $\{n_k\}_{k\geq1}$ satisfying the property of Lemma \ref{lem:key1} for $E_*$. It follows that $E_*$ is a singleton, and the proof is completed.  
\end{proof}

\section{Finiteness of periodic Julia components}
This section is devoted to prove Theorem \ref{thm:main1}. We first investigate the configuration of  periodic complex-type Julia components.
Then the theorem follows by a contradiction argument.

\subsection{Configuration of complex-type periodic Julia components}

Let $f$ be a rational map of degree $d\geq 2$. We may assume that $f(\infty)=\infty$. Let $V_f$ be the set of critical values of $f$. Recall that for a compact connected set $E\subset \mathbb{C}$, $\wh{E}$ denotes the union of $E$ and all bounded components of $\mathbb{C}\setminus E$.

Let $\mc{C}$ denote the collection of 
periodic complex-type Julia components. We specify the  important subsets $\mc{S},\mc{A}$ and $\mc{D}$ of $\mc{C}$ as follows: for any element $K\in\mc{C}$, 
\begin{itemize}
\item if $K\cap (V_f\cup \{\infty\})\not=\emptyset$  or  $\mathbb{C}\sm K$  has at least two bounded components  intersecting $V_f$, then  $K$ is an element of $\mc{S}$;

 \item  if $f^n(K)\not\in\mc{S}$ for every $n\geq0$ and $\mathbb{C}\sm K$ has exactly one bounded component intersecting $V_f$, then $K$ is an element of $\mc{A}$. The bounded component intersecting $V_f$ is denoted as $V(K)$;
 
  \item  if $f^n(K)\not\in\mc{S}$ for every $n\geq0$ and $\wh{K}\cap V_f=\emptyset$, then $K$ is an element of $\mc{D}$.  \end{itemize}
It is easy to check from the definition that 
$\mc{S}$ is a finite set and $f(\mc{A}\cup \mc{D})\subset(\mc{A}\cup \mc{D})$.

Denote by $p(K)$ the period of a periodic Julia component $K$. Note that for any $K\in\mc{A}$, there exists a smallest integer $n(K)\in[1,p(K)]$ such that $f^{n(K)}(K)\in\mc{A}$. Thus, we can  define the first return map $F: \mc{A}\to \mc{A}$ such that  $F(K):=f^{n(K)}(K)$ for any $K\in\mc{A}$. 
\begin{lem}\label{lem:covering}
For any $K\in\mc{A}$, there exists a unique bounded component $U(K)$ of $\mathbb{C}\sm K$, such that the following map is a covering:
$$
f^{n(K)}:\wh{K}\sm U(K)\to\wh{F(K)}\sm V(F(K))
$$ 
\end{lem}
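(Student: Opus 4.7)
The plan is to construct $U(K)$ by pulling back an annular region through $f^{n(K)}$ and identifying the resulting bounded complementary component with a bounded component of $\mathbb{C}\sm K$. First I set $\Omega := \widehat{F(K)}\sm V(F(K))$. Since $F(K)\in\mc{A}$ forces $F(K)\notin\mc{S}$, we have $F(K)\cap(V_f\cup\{\infty\})=\emptyset$ and $V(F(K))$ is the unique bounded component of $\mathbb{C}\sm F(K)$ meeting $V_f$; hence $\Omega\cap V_f=\emptyset$. The two complementary components of $\Omega$ in $\cbar$ are $V(F(K))$ and $\cbar\sm\widehat{F(K)}$, both simply connected because $F(K)$ is a continuum, so $\Omega$ has the topology of a closed annulus ($H^1(\Omega)\cong\mathbb{Z}$, $\chi(\Omega)=0$).

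Next I take $\Omega_K$ to be the connected component of $f^{-n(K)}(\Omega)$ containing $K$. Since $\Omega\cap V_f=\emptyset$, $\Omega_K$ avoids all critical points of $f^{n(K)}$, and $f^{n(K)}:\Omega_K\to\Omega$ is a proper unramified covering. Multiplicativity of the Euler characteristic gives $\chi(\Omega_K)=0$, and an Alexander-duality argument (using that $\Omega_K$ is a proper connected closed subset of $\cbar$) then yields $H^1(\Omega_K)\cong\mathbb{Z}$, so $\cbar\sm\Omega_K$ has exactly two components. Since $\Omega_K$ is bounded in $\mathbb{C}$, these are an unbounded component $W_\infty\ni\infty$ (mapping onto $\cbar\sm\widehat{F(K)}$ via $\infty\mapsto\infty$) and a bounded component $U(K)$ (mapping onto $V(F(K))$).

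It remains to identify $\Omega_K$ with $\widehat{K}\sm U(K)$ and to recognize $U(K)$ as a component of $\mathbb{C}\sm K$. Since $K\subset\Omega_K$, every complementary component of $\Omega_K$ in $\cbar$ is contained in a single complementary component of $\mathbb{C}\sm K$; in particular $U(K)\subseteq W_0$ for some bounded component $W_0$ of $\mathbb{C}\sm K$. For the reverse inclusion, I would argue that $\partial\Omega_K\subseteq K$: certainly $\partial\Omega_K\subseteq f^{-n(K)}(F(K))\subseteq J_f$, and its two connected boundary circles each lie in a single Julia component, which the annular topology of $\Omega_K$ and the complex-type structure of $K\subset\Omega_K$ force to be subsets of $K$ itself. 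Then any bounded component $W\neq W_0$ of $\mathbb{C}\sm K$ is disjoint from $\partial\Omega_K$, so the clopen partition $W=(W\cap\Omega_K)\sqcup(W\sm\Omega_K)$ on the connected set $W$ forces $W\subseteq\Omega_K$ (the alternative $W\subseteq U(K)\cup W_\infty$ is excluded because $W$ is bounded and $W\cap W_0=\emptyset$). Hence $\Omega_K=\widehat{K}\sm W_0$, so $U(K)=W_0$. Uniqueness of $U(K)$ is immediate: any bounded component $U'$ of $\mathbb{C}\sm K$ yielding such a covering makes $\widehat{K}\sm U'$ a connected subset of $f^{-n(K)}(\Omega)$ containing $K$, so $\widehat{K}\sm U'\subseteq\Omega_K=\widehat{K}\sm U(K)$, giving $U(K)\subseteq U'$, and since both are components of $\mathbb{C}\sm K$ we conclude $U'=U(K)$.

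The hard part will be establishing $\partial\Omega_K\subseteq K$ in the last step: a priori the two boundary circles of the annulus $\Omega_K$ could lie in distinct preperiodic Julia components that also map onto $F(K)$ under $f^{n(K)}$. Ruling this out requires a careful topological analysis using both the complex-type hypothesis on $K$ and the surjectivity of $f^{n(K)}|_K:K\to F(K)$, showing that $K$ already ``wraps around'' the annulus $\Omega_K$ essentially and leaves no room for a second boundary Julia component.
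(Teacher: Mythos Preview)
Your argument has a genuine gap at the second paragraph: from $\Omega\cap V_f=\emptyset$ you conclude that $f^{n(K)}:\Omega_K\to\Omega$ is unramified. But $V_f$ is the critical-value set of $f$, not of $f^{n(K)}$; the critical values of $f^{n(K)}$ are $\bigcup_{i=0}^{n(K)-1}f^i(V_f)$, and you have said nothing about why $\Omega$ misses these. Without this, you cannot conclude $\chi(\Omega_K)=0$, and the rest of the construction collapses.

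The missing ingredient is precisely the definition of $n(K)$. Since $n(K)$ is the \emph{first} return time of $K$ to $\mc{A}$ and $f(\mc{A}\cup\mc{D})\subset\mc{A}\cup\mc{D}$, we have $f(K),\ldots,f^{n(K)-1}(K)\in\mc{D}$, i.e.\ $\wh{f^j(K)}\cap V_f=\emptyset$ for $1\le j\le n(K)-1$. The paper exploits this by splitting $f^{n(K)}=f\circ f^{n(K)-1}$: since each $\wh{f^j(K)}$ is a simply connected set whose $f$-image misses $V_f$, the map $f^{n(K)-1}:\wh{K}\to\wh{K'}$ (with $K'=f^{n(K)-1}(K)$) is a homeomorphism. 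Then only the single step $f:\wh{K'}\sm W\to\Omega$ needs the annulus-pullback argument, where $\Omega\cap V_f=\emptyset$ is exactly what is required.

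This decomposition also dissolves your ``hard part''. You worried that the two boundary circles of $\Omega_K$ might lie in Julia components other than $K$; but once $f^{n(K)-1}|_{\wh{K}}$ is a homeomorphism onto $\wh{K'}$, the set $U(K)$ is simply the preimage of $W$ under this homeomorphism, automatically a bounded component of $\mathbb{C}\sm K$, and the identification $\Omega_K=\wh{K}\sm U(K)$ is immediate. No separate analysis of $\partial\Omega_K$ is needed.
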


\begin{proof}
Set $K':=f^{n(K)-1}(K)$. Then $f(K')=F(K)$. Since $\wh{F(K)}\sm V(F(K))$ is disjoint from $V_f$, the component of $f^{-1}(\wh{F(K)}\sm V(F(K)))$ containing $K'$ has exactly two complementary components. One of them is unbounded since $f(\infty)=\infty$. Let $W$ be the bounded one. Then $f:\, \wh{K'}\sm W\to\wh{F(K)}\sm V(F(K))$ is a covering.

Note that $f^{n(K)-1}: \wh{K}\to\wh{K'}$ is a homeomorphism, because  $f(K),\ldots,f^{n(K)-1}(K)\in\mc{D}$ by the definition of $n(K)$. Thus there exists a unique bounded component of $\cbar\sm K$, denoted by $U(K)$, such that $f^{n(K)-1}: \wh{K}\sm U(K)\to\wh{K'}\sm W$ is a homeomorphism. This implies that $f^{n(K)}:\wh{K}\sm U(K)\to\wh{F(K)}\sm V(F(K))$ is a covering. 
\end{proof}

There is an important subset $\mc{B}$ of $\mc{A}$ in the following argument, defined as
$$
\mc{B}:=\{K\in\mc{A}: U(K)\neq V(K)\}.
$$
Note  that for any $K\in\mc{B}$, there exists a smallest integer $m(K)\in[1,p(K)]$ such that $f^{m(K)}(K)\in\mc{B}$. This allows us to define the first return map $G:\mc{B}\to\mc{B}$, defined by $G(K):=f^{m(K)}(K)$ for every $K\in \mc{B}$. It follows from Lemma \ref{lem:covering} that
$$
f^{m(K)}:\wh{K}\sm U(K)\to\wh{G(K)}\sm V(G(K))
$$
is a covering. It is worth noting that $G=F^k$ on $K$ for some integer $k\ge 1$, and this number $k$ is dependent on $K$.  

We can define an  equivalence relation on $\mc{B}$ relative to $V_f$: for $K,K'\in\mc{B}$, define $K\sim K'$ if $V(K)\cap V_f=V(K')\cap V_f$, or equivalently, $\wh{K}\cap V_f=\wh{K'}\cap V_f$. Note that $\mc{B}$ has finitely many $\sim$ equivalence classes. 

There is a natural total ordering relation on each $\sim$ equivalence class of $\mc{B}$: for any $K,K'\in\mc{B}$ in the same $\sim$ equivalence class, define $K\prec K'$ if $\wh{K'}\subset \wh{K}$.  
The following lemma characterizes some key properties of this ordering relation.

\begin{lem}\label{lem:order}
Given any  $K\in\mc{B}$, the following statements hold.
\begin{enumerate}
\item If $K'\in\mc{B}$ and $K\prec K'$, then $n(K)\le m(K)<n(K')\le m(K')$.
\item The collection $\{K'\in\mc{B}:K'\prec K\}$ is a finite set.
\item If $G(K)\sim K$, then either $G(K)=K$ or $G(K)\prec K$.
\end{enumerate}
\end{lem}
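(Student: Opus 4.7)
For (1), $n(K)\le m(K)$ and $n(K')\le m(K')$ are immediate from $\mc{B}\subset\mc{A}$, so the content is $m(K)<n(K')$. The plan is to use the covering $f^{m(K)}\colon\wh K\sm U(K)\to\wh{G(K)}\sm V(G(K))$ of Lemma \ref{lem:covering}. A first topological step --- based on $K\sim K'$ forcing $V(K')\cap V_f=V(K)\cap V_f$ to sit inside $V(K)$, and on $K\in\mc{A}$ having a unique bounded complementary component meeting $V_f$ --- shows $\wh{K'}\subset V(K)\subset\wh K\sm U(K)$. Then for each $1\le j\le m(K)$, I would trace $f^j(V(K))$ by iterating Lemma \ref{lem:covering} along the successive $F$-iterates of $K$, showing that $f^j(V(K))$ is always a bounded component of $\mathbb{C}\sm f^j(K)$ disjoint from $V_f$: at return times the covering image sits in $\wh{F^i(K)}\sm V(F^i(K))$, while between return times $f^j(K)\in\mc{D}$ and $\wh{f^j(K)}\cap V_f=\varnothing$, together with the intermediate homeomorphisms from the proof of Lemma \ref{lem:covering}, handle the step. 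Since $f^j(K')\subset f^j(V(K))$ and a short connectedness argument places every bounded complementary component of $\mathbb{C}\sm f^j(K')$ also inside $f^j(V(K))$, we get $f^j(K')\in\mc{D}$ for every such $j$, hence $n(K')>m(K)$.

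For (2), any two elements of $\mc{B}_K:=\{K'\in\mc{B}\colon K'\prec K\}$ are $\sim$-equivalent to $K$, and their fillings share the non-empty set $V(K)\cap V_f$; two disjoint full continua in $\mathbb{C}$ with non-disjoint fillings must be nested, so $\mc{B}_K$ is totally ordered by $\prec$. Writing it as a chain $L_1\prec L_2\prec\cdots\prec L_r\prec K$, part (1) applied to consecutive pairs yields a strictly increasing chain of positive integers $n(L_1)<n(L_2)<\cdots<n(L_r)<n(K)$, forcing $r<n(K)$.

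For (3), $G(K)\sim K$ gives $\prec$-comparability as in (2), so the possibilities are $G(K)=K$, $G(K)\prec K$, or $K\prec G(K)$; the task is to exclude the last. Suppose $K\prec G(K)$, so $K$ sits in some bounded component $B$ of $\mathbb{C}\sm G(K)$. A clopen argument on the connected continuum $G(K)$ shows that each bounded complementary component $C$ of $\mathbb{C}\sm K$ either misses $G(K)$ entirely (in which case a boundary/neighborhood argument places $C$ in $B$) or engulfs $G(K)$ (in which case $\wh{G(K)}\subset\wh K$, contradicting $K\prec G(K)$), so $\wh K\subset\overline B\subset\wh{G(K)}$. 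If $B\ne V(G(K))$, the symmetric analysis applied to $V(K)$ versus $G(K)$ forces $V(K)=V(G(K))$ and hence $\partial V(K)\subset K\cap G(K)=\varnothing$, impossible. If $B=V(G(K))$, then $U(K)\subset V(G(K))$; but the construction of $U(K)$ in the proof of Lemma \ref{lem:covering} provides a point $q\in U(K)$ with $f^{n(K)}(q)=\infty$, and (1) gives $n(G(K))>n(K)$, so $f^{n(G(K))}(q)=\infty$. This contradicts the covering $f^{n(G(K))}\colon\wh{G(K)}\sm U(G(K))\to\wh{F(G(K))}\sm V(F(G(K)))$ of Lemma \ref{lem:covering}, whose domain contains $V(G(K))\ni q$ (since $G(K)\in\mc{B}$ ensures $U(G(K))\ne V(G(K))$) and whose image is bounded.

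The main obstacle will be the intermediate-image analysis in (1): propagating the claim that $f^j(V(K))$ is a bounded component of $\mathbb{C}\sm f^j(K)$ disjoint from $V_f$ across every $1\le j\le m(K)$ requires threading Lemma \ref{lem:covering} carefully through both the $\mc{A}$-return stages and the $\mc{D}$-transition stages using the homeomorphism structure inside its proof. Once (1) is in place, (2) is immediate, and (3) collapses to the pole/covering contradiction sketched above.
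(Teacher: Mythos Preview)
Your arguments for (1) and (2) track the paper's proof closely: tracing $f^j(V(K))$ through the successive $F$-iterates to see it stays in a bounded complementary component avoiding $V_f$, and then bounding the length of a $\prec$-chain by the strictly increasing $n$-values, is exactly what the paper does.

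Part (3), however, has a genuine gap. You correctly set out to exclude $K\prec G(K)$, but you immediately misread the relation: by definition $K\prec G(K)$ means $\wh{G(K)}\subset\wh{K}$, so it is $G(K)$ that sits in a bounded component of $\mathbb{C}\setminus K$, not $K$ in a bounded component of $\mathbb{C}\setminus G(K)$. Your entire argument from that point is carried out under the configuration ``$K$ inside $G(K)$'', which in the paper's convention is $G(K)\prec K$ --- precisely the conclusion you are meant to establish, not rule out. Your later appeal to (1) for $n(G(K))>n(K)$ does use the hypothesis $K\prec G(K)$ correctly, but it is then combined with the opposite geometric picture, so the two halves of the argument refer to different cases.

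Even setting the orientation issue aside, the pole claim is unjustified: nothing in the proof of Lemma~\ref{lem:covering} forces $U(K)$ to contain a point with $f^{n(K)}(q)=\infty$. The covering $f^{n(K)}\colon\wh{K}\setminus U(K)\to\wh{F(K)}\setminus V(F(K))$ only tells you that $f^{n(K)}(\partial U(K))=\partial V(F(K))$; when the covering degree is one you may simply have $f^{n(K)}(U(K))=V(F(K))$, with no pole in $U(K)$ at all.

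The paper's proof of (3) is shorter and avoids these issues. From $K\prec G(K)$ one gets $\wh{G(K)}\subset V(K)$ (using $G(K)\sim K$ to identify the correct bounded component). The analysis already done in (1) shows $f^{m(K)}(V(K))$ is a bounded complementary component of $G(K)$, hence lies in $\wh{G(K)}\subset V(K)$. Thus $f^{m(K)}(V(K))\subset V(K)$, and Montel's theorem forces $V(K)\subset F_f$, contradicting $G(K)\subset V(K)\cap J_f$.
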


\begin{proof}
(1) By definition, we immediately obtain  $n(K)\le m(K)$ and $n(K')\le m(K')$. So it remains to show $m(K)<n(K')$.

By definition again, there exists $k\geq1$ such that $G=F^k$ on $K$, and it holds that  $U(K)\neq V(K)$ and $U(F^i(K))=V(F^i(K))$ for $1\le i<k$. Set $m_0:=n(K)$ and $m_i:=n(K)+\cdots+n(F^i(K))$ for $1\le i<k$. Then $m(K)=m_{k-1}$. 

Since $V(K)\neq U(K)$, the image $f^{n(K)}(V(K))$ is a bounded component of $\cbar\sm F(K)$ disjoint from $V(F(K))$. If $k=1$, then $m(K)=n(K)$, and hence $f^{m(K)}(V(K))$ is a bounded component of $\cbar\sm F(K)$ disjoint from $V(F(K))$. If $k>1$, then $f^{n(K)}(V(K))\neq U(F(K))$. Thus $f^{m_1}(V(K))$ is a bounded component of $\cbar\sm F^2(K)$ disjoint from $V(F^2(K))$. Inductively, $f^{m(K)}(V(K))=f^{m_{k-1}}(V(K))$ is a bounded component of $\cbar\sm F^{k}(K)$ disjoint from $V(F^{k}(K))$ (see Figure \ref{fig:G3}). In summary, $f^{j}(V(K))$ is a bounded component of $\cbar\sm f^j(K)$ disjoint from $V_f$ for $1\le j\le m(K)$.

As $K'\subset V(K)$, it follows that $f^j(K')\subset f^j(V(K))$ for $1\le j\le m(K)$. Then $f^j(K')\in\mc{D}$ for $1\le j\le m(K)$, and hence $m(K)<n(K')$ by the definition of $n(K')$.

\begin{figure}[htbp]
\centering
\includegraphics[width=15cm]{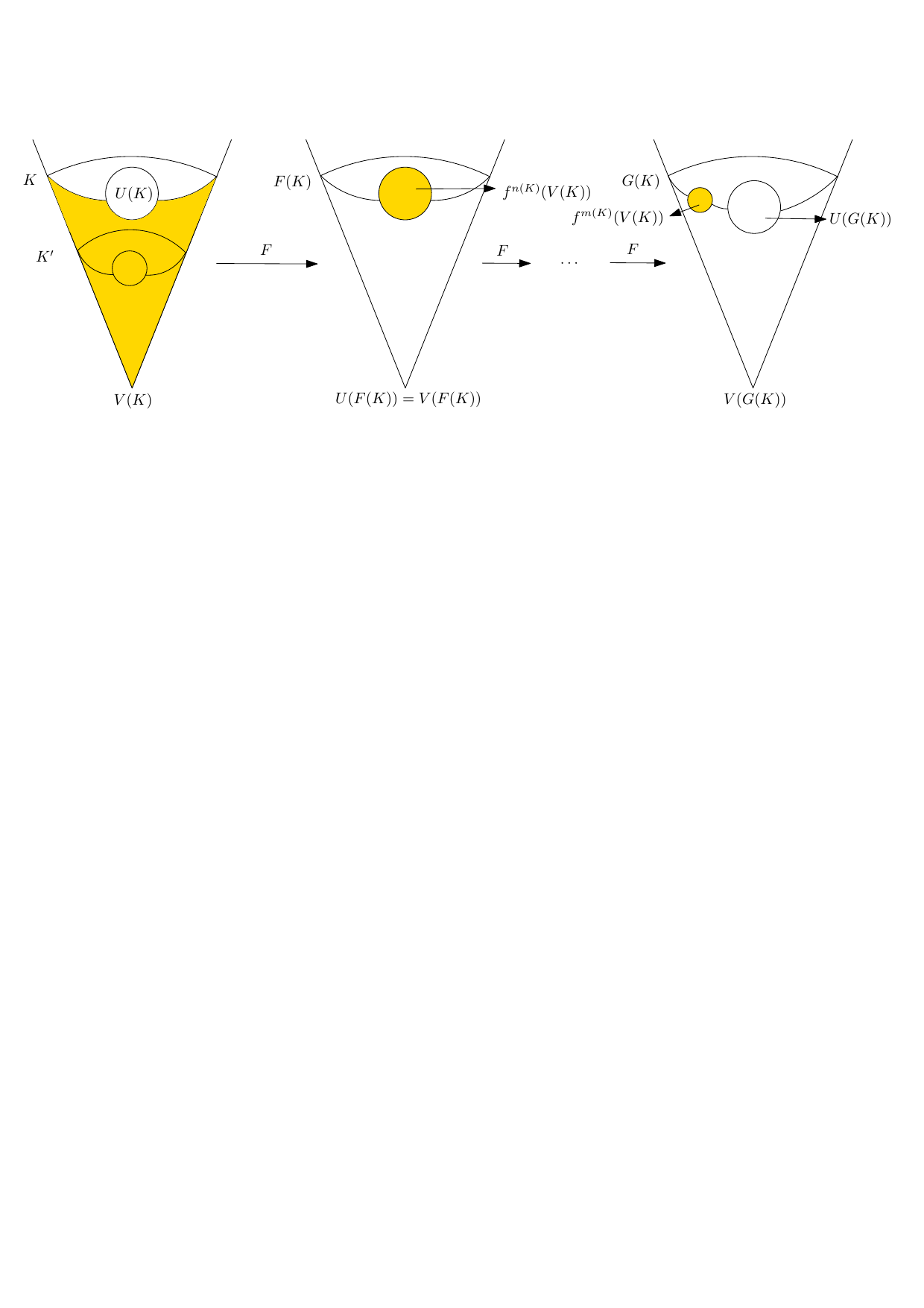}
\caption{}
\label{fig:G3}
\end{figure}
\vspace{3pt}

(2) This statement is a immediate consequence of statement (1).

(3) Assume by contradiction that $K\prec G(K)$. Then $V(K)\supset\wh{G(K)}$. It follows from (1) that $m(K)<m(G(K))$. Thus $f^{m(K)}(V(K))$ is a bounded component of $\cbar\sm G(K)$ and hence $f^{m(K)}(V(K))\subset V(K)$ (see Figure \ref{fig:G1}). Therefore, the sequence $\{f^{k\cdot m(K)}\}_{k\ge 1}$ forms a normal family, which implies  $V(K)\subset F_f$. It is impossible because $V(K)$ contains the Julia component $G(K)$.
\end{proof}
\begin{figure}[htbp]
\centering
\includegraphics[width=5.5cm]{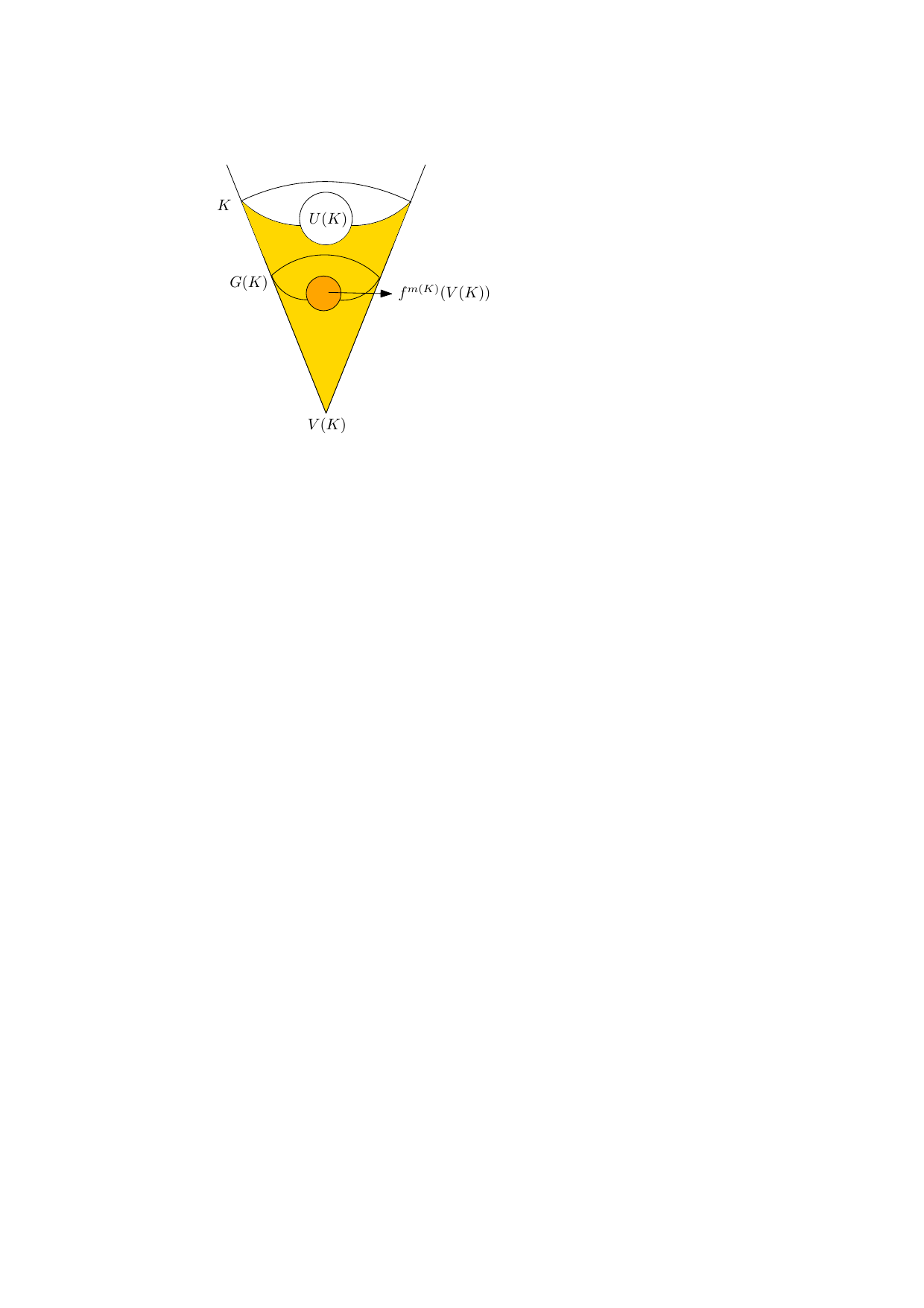}
\caption{}
\label{fig:G1}
\end{figure}

\subsection{Proof of Theorem \ref{thm:main1}}
The proof of Theorem \ref{thm:main1} goes by contradiction. Suppose that $f$ has infinite periodic complex-type Julia components. Then $\#\mc{C}=\infty$.  A key step in the proof is to check $\#\mc{B}=\infty$. This result follows easily from the following three lemmas. 

\begin{lem}\label{2.1}
If $\#\mc{C}=\infty$, then the periods $p(K)$ for all $K\in\mc{C}$ goes to infinity.
\end{lem}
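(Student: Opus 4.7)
The plan is to prove the stronger quantitative statement that, for every $p\ge 1$, the subset $\mc{C}_p:=\{K\in\mc{C}:p(K)=p\}$ of $\mc{C}$ satisfies $\#\mc{C}_p\le d^p+1$. Granted this, for any $M\ge 1$ the set
\[
\{K\in\mc{C}:p(K)\le M\}=\bigsqcup_{p=1}^{M}\mc{C}_p
\]
is a finite union of finite sets, hence finite. Since $\#\mc{C}=\infty$, the periods of elements of $\mc{C}$ must be unbounded, which is exactly the conclusion $p(K)\to\infty$.

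To bound $\#\mc{C}_p$, I would invoke Theorem A for each $K\in\mc{C}_p$: the non-singleton periodic component $K$ is quasiconformally conjugated by some $\phi_K:\cbar\to\cbar$ to the Julia set of some rational map $g_K$, with $\phi_K(K)=J_{g_K}$ and $\phi_K\circ f^p=g_K\circ\phi_K$ on $K$. Since $K$ is a non-degenerate continuum and $\phi_K$ is a homeomorphism, $J_{g_K}=\phi_K(K)$ is also a non-degenerate continuum, ruling out $\deg g_K=1$ (M\"obius maps have a Julia set of at most two points); hence $\deg g_K\ge 2$. The idea is then to produce a fixed point of $f^p$ in $K$ by producing one of $g_K$ in $J_{g_K}$ and pulling it back via $\phi_K^{-1}$.

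For the latter, I would apply the classical rational fixed-point formula (see e.g.\ Milnor, \emph{Dynamics in One Complex Variable}, Thm.~12.4): if every fixed point of a rational map $g$ has multiplier $\lambda_i\ne 1$, then
\[
\sum_{g(z)=z}\frac{1}{1-g'(z)}=1.
\]
An elementary computation gives $\mathrm{Re}\bigl(1/(1-\lambda)\bigr)\ge 1/2$ for every $\lambda$ with $|\lambda|\le 1$ and $\lambda\ne 1$. If all $\deg g_K+1\ge 3$ fixed points of $g_K$ were in $F_{g_K}$ (so attracting or Siegel, with $|\lambda_i|\le 1$ and $\lambda_i\ne 1$), the real part of the displayed sum would be at least $(\deg g_K+1)/2\ge 3/2$, contradicting its value $1$. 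Hence at least one fixed point $w$ of $g_K$ lies in $J_{g_K}$ (either repelling or parabolic), and $\phi_K^{-1}(w)\in K$ is a fixed point of $f^p$.

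Finally, different elements of $\mc{C}_p$ are pairwise disjoint in $\cbar$, so the fixed points of $f^p$ produced above for different $K\in\mc{C}_p$ are pairwise distinct. Since $f^p$ is a rational map of degree $d^p$, it has at most $d^p+1$ fixed points in $\cbar$, yielding $\#\mc{C}_p\le d^p+1<\infty$ as required. The only genuinely non-trivial ingredient is producing a fixed point of $g_K$ in $J_{g_K}$; this is the main obstacle, and is precisely what the rational fixed-point formula together with the elementary estimate $\mathrm{Re}(1/(1-\lambda))\ge 1/2$ rules out in degree $\ge 2$. Everything else is bookkeeping built on Theorem A and a standard degree count.
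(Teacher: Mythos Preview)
Your proof is correct and follows essentially the same route as the paper: both fix a period $p$, use Theorem~A to realize $f^p|_K$ as a rational map $g_K$ on its Julia set, produce a fixed point of $g_K$ in $J_{g_K}$ (the paper cites Milnor, Cor.~12.7, whereas you re-derive this from the rational fixed-point formula Thm.~12.4), and then bound $\#\mc{C}_p$ by the number of fixed points of $f^p$. Your version is slightly more explicit, giving the quantitative bound $\#\mc{C}_p\le d^p+1$ rather than just finiteness.
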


\begin{proof}
Fix any $m\geq1$. Let $\mc{C}_m$ denote the collection of elements of $\mc{C}$ with period exactly $m$. By Theorem A, the action of $f^{m}$ on any element $K$ of $\mc{C}_m$ is quasi-conformally conjugate to a rational map on its Julia set. Together with Corollary 12.7 in \cite{milnor2006dynamics}, it follows that  $K$ contains periodic points of $f$ with period $m$. Since $f$ has finitely many periodic points of period $m$, we have  $\#\mc{C}_m<\infty$. Then $\#\mc{C}=\infty$ implies $\#\mc{C}_m\to\infty$ as $m\to\infty$. 
\end{proof}

 Let  $p_0$ be the maximum of the periods among all elements in $\mc{S}$, and let $p_1$ be the maximum of the periods among all periodic Fatou domains of $f$. 

\begin{lem}\label{lem:A}
For any $K\in\mc{C}$ with $p(K)>p_0$, there exists an integer $n_0\in[1,p(K)]$ such that $f^{n_0}(K)\in\mc{A}$.
\end{lem}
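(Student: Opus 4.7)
The plan is to argue by contradiction.  Suppose $f^n(K)\notin\mc{A}$ for every $n\in[1,p(K)]$.  Every element of $\mc{S}$ has period at most $p_0$, while every $f^n(K)$ inherits period $p(K)>p_0$ from $K$, so the entire orbit of $K$ also avoids $\mc{S}$.  Combined with the contradiction hypothesis, the orbit lies entirely in $\mc{D}$.  Writing $p:=p(K)$ and $K_j:=f^j(K)$ (so $K_{j+p}=K_j$), the defining condition of $\mc{D}$ gives $\wh{K_j}\cap V_f=\emptyset$ for every $j$.  The aim is to show that this forces $f^p|_K$ to be a topological bijection, and then to derive a contradiction by invoking Theorem~A.

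To establish bijectivity of $f^p|_K$, I construct pullback disks along the orbit.  For each $j\in\{0,\ldots,p-1\}$ pick a topological open disk $D_j\subset\mb{C}$ containing $\wh{K_j}$ and disjoint from $V_f$; this is possible because $V_f$ is finite.  Set $D_p:=D_0$ and let $W_j$ be the component of $f^{-1}(D_{j+1})$ containing $K_j$.  Since $D_{j+1}$ contains no critical values of $f$, the full preimage $f^{-1}(D_{j+1})$ contains no critical points, so $f\colon W_j\to D_{j+1}$ is an unbranched covering of a simply connected base and hence a homeomorphism.  In particular $f|_{K_j}\colon K_j\to K_{j+1}$ is a topological bijection, and composing around the cycle, $f^p|_K\colon K\to K$ is a homeomorphism.

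Now I apply Theorem~A.  Since $K$ is a non-singleton periodic Julia component of period $p$, there exist a rational map $g$ and a quasiconformal homeomorphism $\phi\colon\cbar\to\cbar$ with $\phi(K)=J_g$ and $\phi\circ f^p=g\circ\phi$ on $K$.  The bijectivity of $f^p|_K$ therefore transfers to $g|_{J_g}$.  However, for a rational map of degree at least $2$, the restriction to its Julia set is $\deg(g)$-to-one at generic points, hence never bijective; so $\deg g=1$, i.e., $g$ is M\"obius, and $J_g$ contains at most two points.  Via $\phi$, the set $K$ is then at most a two-point set; being connected it must be a singleton, which has connected complement in $\cbar$ and so is of full type, contradicting $K\in\mc{C}$.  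The main technical step is the pullback construction: everything hinges on picking each $D_j$ small enough to miss $V_f$, so that the Riemann--Hurwitz count ``an unbranched cover of a simply connected disk is trivial'' forces $\deg(f|_{W_j})=1$.
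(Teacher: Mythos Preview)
Your proof is correct and follows essentially the same route as the paper: both argue by contradiction that the entire orbit lies in $\mc{D}$, use $\wh{K_j}\cap V_f=\emptyset$ to see that $f^{p}|_K$ is a homeomorphism, and then invoke Theorem~A. You simply supply more detail---the explicit pullback disks $D_j$ and the degree argument for $g$---where the paper just states ``$f^{p(K)}:K\to K$ is a homeomorphism \ldots\ This contradicts Theorem~A.''
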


\begin{proof}
Since $p(K)>p_0$, the orbit of $K$ is disjoint from the set $\mc{S}$. Assume on the contrary that the  lemma is false. Then $K,\ldots, f^{p(K)-1}(K)\in\mc{D}$.  It follows that $f^{p(K)}:K\rightarrow K$ is a homeomorphism, because $\wh{f^j(K)}\cap V_f=\emptyset$ for $j=0,\ldots,p(K)-1$. This contradicts Theorem A.
\end{proof}

\begin{lem}\label{lem:UnV}
For any $K\in\mc{A}$ with $p(K)> p_1$, there exists an integer $k\geq 1$ such that $f^k(K)\in \mc{B}$.
\end{lem}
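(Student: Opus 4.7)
The plan is to argue by contradiction: assume that $F^j(K)\notin\mc{B}$ for every $j\geq 0$, equivalently $U(F^j(K))=V(F^j(K))$ throughout the entire $F$-orbit of $K$. Let $r\geq 1$ be the smallest integer with $F^r(K)=K$, so that $p(K)=\sum_{j=0}^{r-1}n(F^j(K))$. The first step is to invoke Lemma \ref{lem:covering}: it provides the covering $f^{n(F^j(K))}:\wh{F^j(K)}\sm V(F^j(K))\to\wh{F^{j+1}(K)}\sm V(F^{j+1}(K))$, and the proper-map structure of $f^{n(F^j(K))}:\wh{F^j(K)}\to\wh{F^{j+1}(K)}$ forces the common ``hole'' $V(F^j(K))=U(F^j(K))$ to map onto $V(F^{j+1}(K))$ (preimages of points in $V(F^{j+1}(K))$ cannot lie in the covered complement). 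Composing around the $F$-cycle for $j=0,\ldots,r-1$ then yields $f^{p(K)}(V(K))=V(K)$.

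Next I would upgrade this invariance to show that $V(K)$ is itself a Fatou domain. Since $V(K)$ is a bounded simply connected component of $\mathbb{C}\sm K$, the family $\{f^{kp(K)}\}_{k\geq 0}$ of holomorphic self-maps of $V(K)$ is normal on $V(K)$ by Montel's theorem; the standard decomposition $f^n=f^{n\bmod p(K)}\circ f^{p(K)\lfloor n/p(K)\rfloor}$ then transfers normality to the full iterate family $\{f^n\}_{n\geq 0}$, and hence $V(K)\subseteq F_f$. Being connected, $V(K)$ lies in a single Fatou domain $W$. Conversely, $W$ is a connected open set disjoint from $K\subseteq J_f$, so it is contained in one component of $\mathbb{C}\sm K$; that component must contain $V(K)\subseteq W$, so it is $V(K)$ itself. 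Therefore $W=V(K)$, which is a periodic Fatou domain.

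For the final contradiction, let $q$ denote the period of the Fatou domain $W=V(K)$. Then $q\leq p_1<p(K)$ and $q\mid p(K)$. From $f^q(V(K))=V(K)$, continuity on $\overline{V(K)}$ together with the $f$-invariance of $J_f$ give $f^q(\partial V(K))=\partial V(K)$. Since $\partial V(K)\subseteq K$, we obtain $\partial V(K)\subseteq K\cap f^q(K)$; as distinct Julia components are disjoint, $f^q(K)=K$, so $p(K)\mid q$, contradicting $q<p(K)$. The main obstacle I anticipate is the topological identification $W=V(K)$ in the middle step: it relies on the observation that a connected Fatou domain cannot straddle two different complementary components of the Julia component $K$. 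Once that identification is in hand, the dynamical contradiction coming from the gap $q\leq p_1<p(K)$ follows quickly.
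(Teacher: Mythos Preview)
Your argument has a genuine gap at the very first step.  You assert that $f^{n(K)}:\wh{K}\to\wh{F(K)}$ is a proper map and hence that $f^{n(K)}(V(K))=V(F(K))$, but Lemma \ref{lem:covering} gives you no control whatsoever over the ``hole'' $U(K)=V(K)$: the covering is defined only on $\wh{K}\sm U(K)$.  Nothing prevents $V(K)$ from containing a pole of $f^{n(K)}$, in which case $f^{n(K)}(V(K))$ contains $\infty$ and is certainly not contained in $\wh{F(K)}$.  More fundamentally, $V(K)$ is by definition the bounded component of $\mathbb{C}\sm K$ meeting $V_f$, and that critical value may well lie in $J_f$ (indeed, the hypothesis of Theorem \ref{thm:main1} explicitly allows wandering Julia components containing critical values); so the conclusion $V(K)\subset F_f$ that you are aiming for is simply false in general.

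The paper's proof exploits the covering in the opposite direction: instead of the uncontrolled hole $V(K)$, it takes any \emph{other} bounded component $\Omega\neq U(K)=V(K)$ of $\mathbb{C}\sm K$.  Such an $\Omega$ lies inside $\wh{K}\sm U(K)$, so the covering guarantees that $f^{n(K)}(\Omega)$ is again a bounded component of $\mathbb{C}\sm F(K)$ different from $V(F(K))$; iterating, every $f^n(\Omega)$ stays bounded, Montel gives $\Omega\subset F_f$, and the period argument you sketched in your final paragraph (applied to $\Omega$ rather than $V(K)$) shows $\Omega$ would have to be wandering, contradicting Sullivan.  Hence no such $\Omega$ exists, contradicting that $K$ is of complex type.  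The fix, in other words, is to run your Montel/period argument on the components $\Omega\neq V(K)$ rather than on $V(K)$ itself.
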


\begin{proof}
Assume by contradiction that the orbit of $K$ never enters $\mc{B}$. Then for any $k\geq1$, we have that $U(f^n(K))=V(f^n(K))$. As a result, for any bounded component $\Omega$ of $\cbar\sm K$ with $\Omega\neq U(K)$,  the $n$-th iteration $f^n(\Omega)$ is a bounded component of $\cbar\sm f^n(K)$ for all $n\ge 1$. Therefore, $\{f^n(\Omega)\}_{n\ge 0}$ forms a normal family, and then $\Omega$ is a Fatou domain. 

Since $p(K)>p_1$, $\Omega$ is a wandering Fatou domain, a contradiction. Hence $\mathbb{C}\setminus K$ has the unique bounded component $U(K)$, which contradicts $K\in\mc{C}$. 
\end{proof}

\begin{pro}\label{pro:finite}
If $\#\mc{C}=\infty$, then $\#\mc{B}=\infty$.
\end{pro}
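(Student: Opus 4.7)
The plan is to combine the three preceding lemmas directly, using that the period of a periodic Julia component is preserved under forward iteration (if $K$ is periodic of period $p$, then so is $f^j(K)$ for every $j\geq 0$). Under the hypothesis $\#\mc{C}=\infty$, Lemma \ref{2.1} supplies an unbounded sequence of periods in $\mc{C}$, and I would like to transport each such element (of sufficiently large period) into $\mc{B}$ while keeping track of its period.

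Concretely, I would start from any $K\in\mc{C}$ with $p(K)>\max(p_0,p_1)$, which exists by Lemma \ref{2.1}. Applying Lemma \ref{lem:A} produces an integer $n_0\in[1,p(K)]$ and an element $K':=f^{n_0}(K)\in\mc{A}$. Since $K'$ lies in the forward orbit of a periodic component, $p(K')=p(K)>p_1$. Then Lemma \ref{lem:UnV} applies to $K'$ and yields $k\geq 1$ with $K'':=f^k(K')\in\mc{B}$, again of period $p(K'')=p(K)$.

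Thus each element $K\in\mc{C}$ of period exceeding $\max(p_0,p_1)$ produces, under iteration, an element of $\mc{B}$ with the same period. By Lemma \ref{2.1}, the set of such $K$ contains elements of arbitrarily large period, so $\mc{B}$ does as well, and in particular $\#\mc{B}=\infty$.

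There is no real obstacle; the proof is purely a bookkeeping argument that chains the three lemmas together. The only point I would be careful to emphasize is the invariance of the period under forward iteration, which ensures that the successive applications of Lemmas \ref{lem:A} and \ref{lem:UnV} do not erode the period bound, so the element of $\mc{B}$ eventually produced inherits the large period of the original $K\in\mc{C}$.
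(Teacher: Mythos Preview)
Your proof is correct and follows essentially the same route as the paper: chain Lemmas \ref{2.1}, \ref{lem:A}, and \ref{lem:UnV} to pass from $\mc{C}$ to $\mc{A}$ to $\mc{B}$. Your explicit tracking of periods (taking $p(K)>\max(p_0,p_1)$ at the outset) makes the applicability of Lemma \ref{lem:UnV} cleaner than the paper's slightly terser version, but the argument is the same.
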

\begin{proof}
Note that if $K\in\mc{C}$ has period larger than $p_0$, then $K\in \mc{A}\cup\mc{D}$. According to Lemma \ref{2.1}, there are infinitely many elements of $\mc{C}$ with periods larger than $p_0$. Then $\#(\mc{A}\cup\mc{D})=\infty$. Lemma \ref{lem:A} implies further that $\#\mc{A}=\infty$. Finally, by applying Lemma \ref{lem:UnV}, we can deduce $\#\mc{B}=\infty$ from the fact that $\#\mc{A}=\infty$.
\end{proof}

\begin{proof}[Complete the proof of Theorem \ref{thm:main1}]
Since  $\#\mc{B}=\infty$, there exists $\sim$ equivalent classes with infinitely many elements.  Denote by $\mc{B}_1,\ldots,\mc{B}_q$ all such equivalence classes.
For any $i\in\{1,\ldots,q\}$, set  
\begin{equation}\label{eq:222}
E_{i}:= \bigcap_{K\in\mc{B}_i}V(K).
\end{equation}

\emph{Claim 1. Each $E_{i}$ is a wandering Julia component containing critical values.}
\begin{proof}[Proof of Claim 1]
Notice that if $K\prec K'\in \mc{B}_i$, then $V(K')\Subset V(K)$ and $V(K')\cap V_f=V(K)\cap V_f\not=\emptyset$. So $E_{i}$ is a full, connected and compact set intersecting $V_f$, with $\partial E_i\subset J_f$. Let $J_i$ denote the Julia component containing $\partial E_i$. Clearly, $J_i\subset V(K)$ for all $K\in \mc{B}_i$. This implies $J_i=\partial E_i$. 

For any $K\in \mc{B}_i$, since $U(K)\not=V(K)$,  if follows from Lemma \ref{lem:covering} that $f^{n(K)-1}$ is injective on $V(K)$. Due to Lemma \ref{lem:order}.(1),  the integers $n(K), K\in \mc{B}_i$ are unbounded. So $f^n$ is injective on $E_{i}$ for all $n\ge 0$. It then follows from Theorem A that $E_i$ is wandering. Hence $E_i=\partial E_i$ as there are no wandering Fatou domains. 
\end{proof}

Let  $\{K_k\}_{k\geq 1}$ be an infinite sequence in $\mc{B}_1$ such that $K_k\prec K_{k+1}$ and $p(K_k)<p(K_{k+1})$ (by Lemma \ref{2.1}) for every $k\geq 1$. 
Since $\mc{B}$ has finitely many $\sim$ equivalence classes, by taking subsequences, we may assume that 
$\{G(K_k)\}_{k\ge 1}$ belongs to a common $\sim$ equivalence class, denoted by $\mc{B}_2$. According to Lemma \ref{lem:order}.(2), we may further assume that $G(K_k) \prec G(K_{k+1})$ for each $k\geq1$. Obviously, $E_1=\bigcup_{k\geq1}V(K_k)$ and $E_2=\bigcap_{k\geq1}V(G(K_k))$. \vspace{5pt}

\emph{Claim 2. The orbit of $E_1$ accumulates to $E_2$.}
\begin{proof}[Proof of Claim 2]
By the defintion of $G:\mc{B}\to \mc{B}$, for any $k\geq1$, we have that $G(K_k)=f^{m(K_k)}(K_k)$ and $f^{m(K_k)}(V(K_k))$ is a bounded component of $\cbar\sm G(K_k)$ distinct from $V(G(K_k))$. Since $E_{1}\subset V(K_k)$ for all $k\ge 1$, it follows that  $f^{m(K_k)}(E_1)$ is contained in a bounded component of $\cbar\sm G(K_k)$ distinct from $V(G(K_k))$, for every $k\geq1$. Since  $G(K_k)\prec G(K_{k+1})$, the sequence  $f^{m(K_k)}(E_1), k\geq 1$ are pairwise disjoint. On the other hand, as $E_{2}= \bigcap_{k\ge 1}V(G(K_k))$, we obtain that $f^k(E_1)\cap E_2=\emptyset$ for all $k\geq 1$, but the orbit of $E_1$ accumulates $E_2$.  See Figure \ref{fig:recurrent}. \end{proof}

\begin{figure}[htbp]
\centering
\includegraphics[width=12cm]{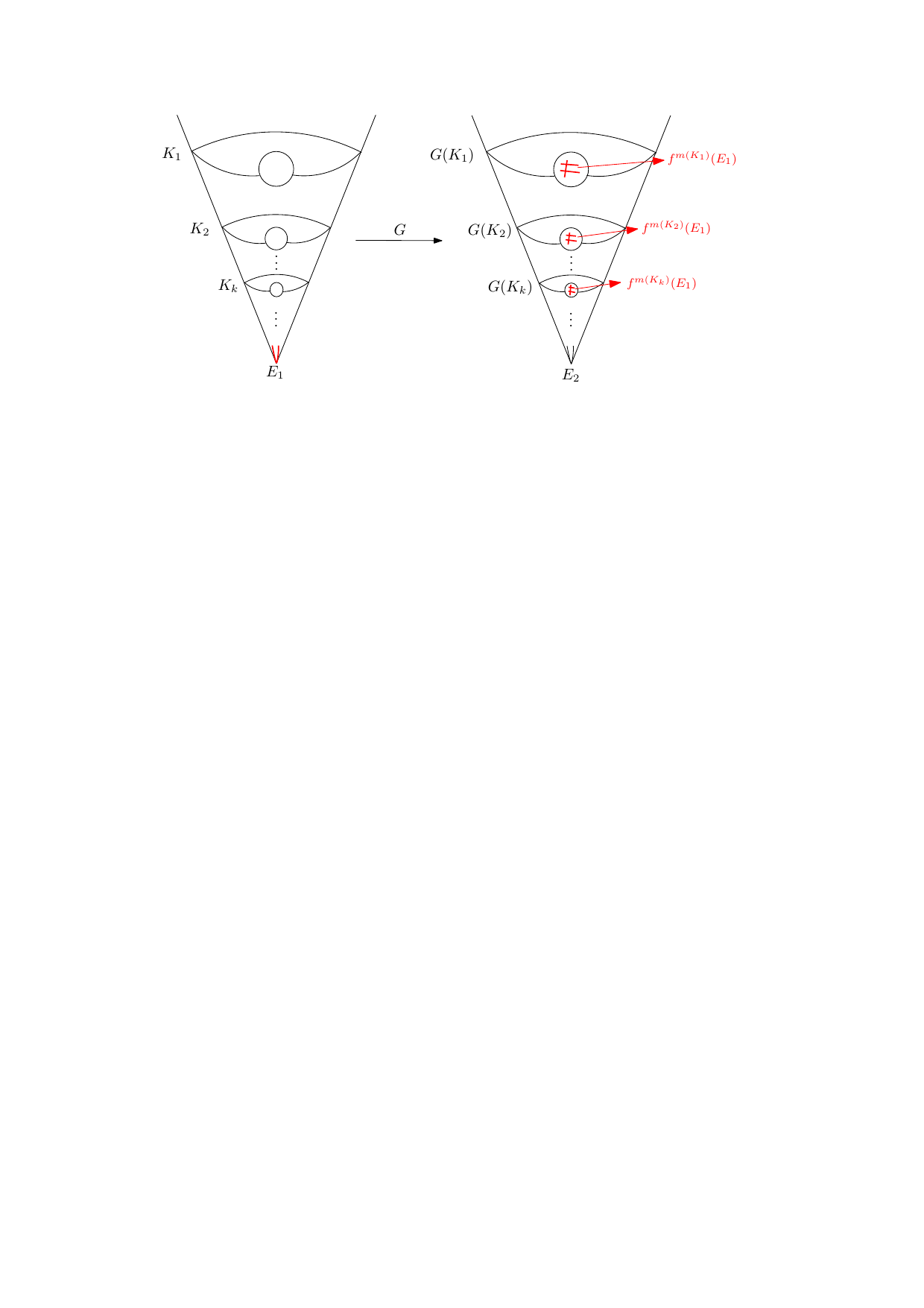}
\caption{}
\label{fig:recurrent}
\end{figure}

By inductively using the argument above, we obtain a sequence of sets $\{E_j\}$ defined in \eqref{eq:222} such that the orbit of $E_j$ accumulates to $E_{j+1}$. Since there are only $q$ distinct $E_j$, there must be an $j_0$ such that the orbits of $E_{j_0}$  accumulates to itself. By Claim 1,  such $E_{j_0}$ is a recurrent Julia component containing critical values,   
a contradiction. 
Then we  complete the proof of Theorem \ref{thm:main1}. 
\end{proof}

\bibliography{wandering}{}
\bibliographystyle{abbrv}
\end{document}